\documentclass[1p,preprint]{elsarticle}
\journal{Linear Algebra and its Applications}
\usepackage[dvipsnames]{xcolor}
\usepackage{algpseudocode}
\usepackage{algorithm}
\usepackage{amsmath}
\usepackage{amssymb}
\usepackage{amsthm}
\usepackage{comment}
\usepackage[subnum]{cases}
\usepackage{subfigure}
\usepackage{pgfplots}
\usepackage{tikz}
\usetikzlibrary{decorations.pathreplacing}
\usetikzlibrary{patterns,arrows,decorations.pathreplacing}
\usetikzlibrary{shapes.misc}

\title{Min-Max Elementwise Backward Error\\
for Roots of Polynomials and a\\
Corresponding Backward Stable Root Finder\tnoteref{t1}}
\tnotetext[t1]{Version of January 14, 2020.}

\makeatletter
\def\ps@pprintTitle{%
     \let\@oddhead\@empty
     \let\@evenhead\@empty
     \def\@oddfoot{}
     \let\@evenfoot\@oddfoot}
\makeatother


\makeatletter
\@addtoreset{equation}{section}
\@addtoreset{figure}{section}
\@addtoreset{table}{section}
\makeatother

\newtheorem{example}{Example}

\def\deg{d}

\def\twobyone#1#2{\bigl[{\hfil#1\atop\hfil#2}\bigr]}

\def\qedsymbol{\vbox{\hrule\hbox{%
                     \vrule height1.3ex\hskip0.8ex\vrule}\hrule}}

\def\myendproof{\qquad\qedsymbol}
\def\noqed{\def\qedsymbol{}}
\def\mystrut#1{\rule{0cm}{#1}}
\def\C{\mathbb{C}}
\def\R{\mathbb{R}}

\newcommand{\hz}{\widehat{z}}

\newcommand{\tp}{\widetilde{p}}
\newcommand{\delp}{\Delta p}
\newcommand{\delbp}{\Delta \bp}
\newcommand{\relp}{\delta p}
\newcommand{\delz}{\Delta z}
\newcommand{\hbeta}{\widehat{\beta}}
\newcommand{\hgamma}{\widehat{\gamma}}
\newcommand{\hA}{\widehat{A}}
\newcommand{\hB}{\widehat{B}}
\newcommand{\wA}{\widetilde{A}}
\newcommand{\wB}{\widetilde{B}}
\newcommand{\dhA}{\Delta\widehat{A}}
\newcommand{\dha}{\Delta\widehat{a}}
\newcommand{\dhB}{\Delta\widehat{B}}
\newcommand{\ha}{\widehat{a}}
\newcommand{\hb}{\widehat{b}}
\newcommand{\be}{\eta}
\newcommand{\eps}{\epsilon}
\newcommand{\OO}{{\cal O}}

\newcommand{\emach}{\epsilon_{\mbox{{\scriptsize mach}}}}

\newcommand{\tr}{\tau}
\newcommand{\ttr}{\widetilde{\tr}}
\newcommand{\Rp}{\mathbb{R}^+}
\newcommand{\Rmaxt}{\mathbb{R}_{\max,\times}}
\newcommand{\tplus}{\oplus}
\newcommand{\tmult}{\otimes}
\newcommand{\tpol}{{\tt t}p}
\newcommand{\hC}{\widehat{C}}
\newcommand{\diag}{\mathrm{diag}}
\newcommand{\epsmach}{\epsilon_{\mbox{\scriptsize mach}}}

\newcommand{\cred}[1]{{\color{red} #1}}

\newcommand{\MATLAB}{{MATLAB}}
\newcommand{\CC}{\mathbb C}
\newcommand{\polyeig}{{\tt polyeig }}
\newcommand{\quadeig}{{\tt quadeig }}

\newcommand{\ftt}[1]{{\tt #1}}
\newcommand{\ftts}[1]{{\small\tt #1}}
\newcommand{\ttb}[1]{\ftts{#1}}

\newcommand{\na}{---}
\newcommand{\tc}[1]{\bf\cred{#1}}

\newcommand{\bp}{{\bold p}}
\newcommand{\btp}{\widetilde{\bold p}}
\newcommand{\berra}{\eta^{\mathrm{norm}}}
\newcommand{\berrb}{{\eta^{\mathrm{elem}}_{|\bp|}}}
\newcommand{\berrc}{{\eta^{\mathrm{elem}}_{\btg}}}

\newtheorem{theorem}{Theorem}
\newtheorem{assumption}{Assumption}
\newtheorem{definition}{Definition}

\setlength{\marginparsep}{0.25cm}
\setlength{\marginparwidth}{1.5cm}

\newenvironment{ft}{\begin{quote}\color{blue} \small\sf FT $\diamondsuit$~}{\end{quote}}

\newcounter{exemple}
\def\Experiment{Experiment}

\newenvironment{exemple}%
               {\begin{list}{\indent\emph{\Experiment}
               {\upshape\arabic{exemple}.}}%
                {\usecounter{exemple}
                \setlength{\leftmargin}{\rightmargin}
                \setlength{\labelwidth}{\leftmargin}
                \addtolength{\labelwidth}{-\labelsep}
                \setlength{\topsep}{0in}
                \setlength{\itemsep}{0pt}
                \setlength{\listparindent}{\parindent}%
               }}%
               {\end{list}}
\def\l{\lambda}
\def\bz{\bold z}
\def\bhz{\widehat{\bz}}
\def\ba{\boldsymbol\alpha}
\def\etaea{\eta^{\mathrm{elem}}_{\ba}}
\def\btg{\widetilde{\boldsymbol\gamma}}
\def\err{\mathrm{err}}

\begin{document}
\begin{frontmatter}

\author[man]{Fran\c{c}oise Tisseur}\ead{francoise.tisseur@manchester.ac.uk}
\author[ku]{Marc Van Barel\corref{cor1}\fnref{label2}}\ead{marc.vanbarel@cs.kuleuven.be}

\address[man]{Department of Mathematics, The University of Manchester,
              Manchester, M13 9PL, UK}
\address[ku]{Department of Computer Science, KU Leuven, B-3001 Leuven (Heverlee),
Belgium}
\cortext[cor1]{Corresponding author.}
\fntext[label2]{This author was partially supported by
the Research Council KU Leuven,
C1-project (Numerical Linear Algebra and Polynomial Computations),
and by
the Fund for Scientific Research--Flanders (Belgium),
G.0828.14N (Multivariate polynomial and rational interpolation and approximation),
and EOS Project no 30468160.}

\begin{abstract}
A new measure called min-max elementwise backward error is introduced
for approximate roots of scalar polynomials $p(z)$.
Compared with the elementwise relative backward error, this new measure allows for
larger relative perturbations on the coefficients of $p(z)$ that do not participate much
in the overall backward error.
By how much these coefficients can be perturbed is determined
via an associated max-times polynomial and its tropical roots.
An algorithm is designed for computing the roots of $p(z)$.
It uses a companion linearization $C(z) = A-zB$ of $p(z)$ to which we
added an extra zero leading coefficient, and
an appropriate two-sided diagonal scaling that balances $A$ and makes $B$
graded in particular when there is variation in the magnitude
of the coefficients of $p(z)$.
An implementation of the QZ algorithm with a strict deflation criterion for
eigenvalues at infinity is then used to obtain approximations to the roots of $p(z)$.
Under the assumption that this implementation of the QZ algorithm exhibits a
graded backward error when $B$ is graded,
we prove that our new
algorithm is min-max elementwise backward stable.
Several numerical experiments show the superior performance of the new algorithm
compared with the MATLAB \texttt{roots} function.
Extending the algorithm to polynomial eigenvalue problems leads to
a new polynomial eigensolver
that exhibits excellent numerical behaviour compared with other existing
polynomial eigensolvers, as illustrated by many numerical tests.
\end{abstract}

\begin{keyword}
zeros of polynomials \sep
polynomial root finder \sep
elementwise backward error\sep
tropical roots\sep
polynomial eigenvalue problems\sep
(block) companion linearization\sep

\MSC
65F15\sep 65H04\sep 30C15\sep 15A22\sep 15A80\sep 15A18\sep 47J10
\end{keyword}

\end{frontmatter}

\section{Introduction}\label{sec_intro}
We consider the problem of computing all the zeros $z_k$, $k = 1,2,\ldots,\deg$, of
the scalar polynomial  $p(z)$ of degree $\deg$ expressed in the monomial basis
as
$$
p(z) = \sum_{i=0}^\deg p_i z^i.
$$
We denote by $\hz_k$ the approximate zeros computed by some algorithm whose
numerical stability we  want to assess.
For this we consider the backward error in a \emph{global} way, i.e., for
all computed roots at the same time. The computed zeros $\hz_k$ are the exact
zeros of a polynomial
$$
        \tp(z) = \sum_{i=0}^\deg \tp_i z^i
        = \tp_d(z-\hz_1)\cdots(z-\hz_d)=p(z)+\Delta p(z)
$$
and the backward error
measures the difference between the vector of coefficients $\bp = \left[ p_0,
p_1,\ldots,p_\deg \right]$ of the given polynomial $p(z)$ and the
vector of coefficients $\btp = \left[ \tp_0, \tp_1,\ldots, \tp_\deg \right]$ of the polynomial
$\tp(z)$.
Assuming that $p_d=\tp_d$, we can consider the normwise relative backward error
$$
\berra = \frac{\| \btp - \bp \|}{\| \bp \|}= \frac{\|\Delta \bp\|}{\|\bp\|}
$$
for some vector norm $\|\cdot\|$
or the elementwise relative backward error
\begin{equation}\label{eq.releleberr}
\berrb = \max_{i, p_i \neq 0} \frac{| \tp_i - p_i |}{| p_i |}
 \quad \mbox{if $\tp_i = 0$ whenever  $p_i = 0$,}
\end{equation}
and $\berrb = \infty$ if $\tp_i\ne 0$ when $p_i=0$ for some $i$.
This elementwise backward error was studied in~\cite{EdelmanMurakami1995}
expanding on earlier work by Van Dooren and Dewilde~\cite{n643}.
In fact, with the latter measure of the backward error, there is
no backward stable polynomial root solver~\cite{MVD2015}
while there exist several normwise backward stable algorithms, e.g., the fast polynomial root solver
described in~\cite{AMVW2015}.
Example~\ref{ex0} below shows that the normwise backward error $\berra$
can be much smaller than the elementwise relative backward error
$\berrb$ and when combined with a
condition number, they do not provide sharp
upper bounds on the relative errors $|z_i-\hz_i|/|\hz_i|$.
So we introduce in section~\ref{sec_def_be} a new measure of the
backward error, denoted by $\eta^{\mathrm{elem}}_{\btg}$, called min-max
elementwise backward error, and for which the perturbations
$\Delta p_i$ are measured
relative to some parameters $\widetilde\gamma_i\ge |p_i|$.
In section~\ref{sec_compute_gamma}, we show the connection between the
parameters
$\btg=[\widetilde\gamma_0,\widetilde\gamma_1,\ldots,\widetilde\gamma_\deg]$
associated with this new backward error measure and the tropical roots of the
max-times polynomial $\tpol(x)= \max_i (|p_i| x^i)$ associated with $p(z)$.
In section~\ref{sec_algo}, we describe a new polynomial root finder
for $p(z)$
based on a $(\deg+1)\times (\deg+1)$ companion linearization
$C(z) = A-zB$ of the grade $\deg+1$ polynomial
$0z^{\deg+1}+p(z)$ and an appropriate two-sided diagonal scaling of $C(z)$ that
balances the matrix $A$ and makes the matrix $B$ graded when there are large
variation in the magnitude of the tropical roots.
This property of the scaled pencil is crucial for the numerical stability of
our algorithm and can be difficult to achieve on a companion
linearization of $p(z)$ but not for $0z^{\deg+1}+p(z)$.
The diagonal scaling of $C(z)$ is then followed by a deflation of the
artificially introduced eigenvalue at infinity.
Finally, we use an implementation of the QZ algorithm with a ``strict" deflation
criterion for the eigenvalues at infinity to compute the finite eigenvalues of
the scaled and deflated pencil, which we return as approximate roots of $p(z)$.
We prove in section~\ref{sec_be} that this new polynomial root finder is min-max elementwise backward
stable under the assumption that, when applied to a pencil $A- zB$ with $A$
well-balanced and $B$ graded, the QZ algorithm with strict deflation at infinity
computes the exact generalized Schur form of a perturbed pencil
$A+\Delta A- z(B+\Delta B)$ with
$|(\Delta A)_{ij}|$ of order of the machine precision $\emach$
and a $\Delta B$ that can be
written as $\emach$ times a graded matrix.
Section~\ref{sec_num_exp} presents numerical experiments that illustrate
the min-max elementwise backward stability of the new polynomial root finder.
We explain how to extend our algorithm
to the computation of eigenvalues of matrix polynomials.
This leads to a new polynomial eigensolver based on a tropically
scaled block companion pencil. Numerical experiments
show that this new polynomial eigensolver and the eigensolver based on a
tropically scaled Lagrange linearization described in~\cite{VBT2017}
both compute eigenvalues with small relative normwise backward errors.
An advantage of the new eigensolver over that in~\cite{VBT2017}, is that it does
not require the computation of ``well-separated tropical roots" and is easier to
implement.
Section~\ref{sec_concl} gives our conclusions.

\begin{example}\label{ex0}
Let us now compute the three measures $\berra$, $\berrb$ and
$\eta^{\mathrm{elem}}_{\btg}$ for the backward error
when computing the roots of
\begin{equation}\label{def.ex1}
        p(z) = z^4-z^3+2\cdot 10^{-25}z^2 +10^{-30}z-10^{-60}
\end{equation}
using the {\rm MATLAB} function {\rm\texttt{roots}} and
the new algorithm {\rm(}written in {\rm MATLAB}{\rm)}. The results are provided in Table~\ref{tab1} together with the
relative forward errors
$$
      \err(\hz_k):=\frac{|z_k-\hz_k|}{|z_k|},
$$
$k=1,\ldots,4$.
For this example, the new algorithm computes the roots of $p(z)$ more accurately
than {\rm\texttt{roots}}.
Note that we could wrongly decide that the roots have been well computed when
looking at $\berra$ when using  {\rm\tt roots}.

Assuming that the zeros $z_k$ of $p(z)$
are all simple and neglecting the higher order terms,
the relative forward error can be written as
\begin{equation}\label{eq771}
\err(z_k) = \frac{|\delp(\hz_k)|}{|z_k|\,| p'(\hz_k)|}.
\end{equation}
It will be clear from section~\ref{sec_def_be} that the
following upper bounds for the numerator in \eqref{eq771} hold:
\begin{numcases}
{| \delp(\hz_k) |\leq}
\berra \| [p_0,p_1, \ldots,p_d]\| \,\,\| 1, \hz_k,
\ldots,\hz_k^d] \|, \label{ubound1} \\
\berrb \sum_{i=0, p_i \neq 0}^d |p_i| |\hz_k|^i, \label{ubound2} \\
\berrc (d+1) \max_{0\leq j \leq d} | p_j \hz_k^j | . \label{ubound3}
 \end{numcases}
%
Note that the small roots of $p(z)$ in \eqref{def.ex1} are ill conditioned for a
normwise measure of the perturbations since $\| [p_0,p_1, \ldots,p_4]\| \,\,\| 1,
\hz_k, \ldots,\hz_k^4] \|$ is of order one but $|z_k|\,| p'(\hz_k)|$ is very
small.
On the other hand, all the roots of $p(z)$ are all well-conditioned when
perturbations are measured elementwise, i.e., the values of $(\sum_{i=0, p_i \neq
0}^4 |p_i| |\hz_k|^i)/(|\hz_kp'(\hz_k)|$ and $\max_{0\leq j \leq 4} | p_j
\hz_k^j |/(|\hz_kp'(\hz_k)|$ are of order 1.
From~\eqref{ubound3}, it is clear that the computed
zeros are the exact zeros of a polynomial with a relative error of the order of
$\berrc$ on the coefficients of the \emph{dominant} terms. Hence, the zeros
computed by the new algorithm can be seen as the exact zeros of a polynomial
whose coefficients were rounded up to the order of the machine precision
since $\berrc =6.7\times 10^{-16}\approx 3\emach$.
This is the best we can hope for when computing in finite precision.  This
results in computed zeros with relative forward errors of the size of the
machine precision.

\begin{table}
\caption{Relative error $\err(\hz_k)$ for the four roots of $p(z)$ in
\eqref{def.ex1} computed by \texttt{roots} and by the new root solver, and
corresponding backward errors.}\label{tab1}
\begin{center}
\footnotesize
\begin{tabular}{ccccc}
&& \texttt{roots} && new algorithm\\ \hline
\mystrut{.4cm}
$z_k$ && $\err(\hz_k)$ && $\err(\hz_k)$\\
\ftt{-9.999999999000001e-16} &&\ftt{1.5e-09}&&\ftt{1.1e-16} \\
\ftt{+9.999999999999999e-31} &&\ftt{5.1e-02}&&\ftt{1.5e-16}  \\
\ftt{+1.000000000100000e-15} &&\ftt{1.5e-09}&&\ftt{2.1e-16}    \\
\ftt{+1.000000000000000e+00} &&\ftt{0}&&\ftt{2.2e-16}\\
\\
&&$\berra =$ \ftt{8.2e-27} && $\berra =$ \ftt{4.7e-16} \\
&&$\berrb =$ \ftt{5.1e-02} &&$\berrb =$ \ftt{1.5e-06}\\
&&$\berrc =$ \ftt{5.1e-02} &&$\berrc =$ \ftt{6.7e-16}\\
\end{tabular}
\end{center}
\end{table}
\end{example}
\section{Min-max elementwise backward error}\label{sec_def_be}

Without loss of generality, we can assume the zero roots have been deflated and
that the resulting polynomial $p(z)$ has no root equal to zero so that $p_0\ne
0$. The
\emph{elementwise backward error for an approximate root} $\hz_k$ of $p(z)$ is
defined as
\begin{equation}\label{def.eberr}
        \etaea(\hz_k)
        = \min\{
        \eps: \ p(\hz_k)+\delp(\hz_k) =0, \
        |\delbp|\le \eps \boldsymbol\alpha\},
\end{equation}
where $\delp(z)=\sum_{i=0}^d \delp_i z^i$,
$\delbp=[\delp_0,\ldots,\delp_d]$,  the entries of
$\ba=[\alpha_0,\ldots,\alpha_d]$ are
nonnegative parameters, and the inequality  $|\delbp|\le \eps \boldsymbol\alpha$
holds componentwise. The parameters $\alpha_i$ allow freedom in how
perturbations are measured, for example, in an absolute sense with $\alpha_i= 1$ or
relative sense if $\alpha_i=|p_i|$.
It follows from $p(\hz_k)+\delp(\hz_k) =0$ that
\begin{equation}\label{pbound}
        |p(\hz_k)| = |\delp(\hz_k)|
        \le
        \sum_{i=0}^d |\delp_i| |\hz_k|^i
        \le
        \eps \sum_{i=0}^d \alpha_i |\hz_k|^i
\end{equation}
so that $\etaea(\hz_k) \ge |p(\hz_k)|/\sum_{i=0}^d \alpha_i |\hz_k|^i$.
It is easy to check that the lower bound is attained for the perturbations
$\delp_i = - (\sum_{i=0}^d \alpha_i |\hz_k|^i)^{-1} \alpha_i
\mathrm{sign}(\hz_k^i)p(\hz_k)$, $i=0,\ldots, d$.
Hence
\begin{equation}\label{eq.eberr}
        \etaea(\hz_k)
        = \frac{|p(\hz_k)|}{\sum_{i=0}^d \alpha_i |\hz_k|^i}.
\end{equation}
Note that the backward error \eqref{def.eberr}
is just a particular case of the componentwise backward error for an approximate
solution to a linear
system $Ax=b$ with rectangular matrix $A$ equal to the row vector
$\bp=[p_0,\ldots ,p_d]$ and $b=0$ \cite[Sec. 7.2]{high:ASNA2}.
So \eqref{eq.eberr} is a special case of Oettli and Prager's explicit expression
for the componentwise backward error for linear systems \cite{oepr64}.

Now with the particular choice $\ba=|\bp|=[|p_0|,\ldots, |p_d|]$ to measure the
perturbations, we obtain the
\emph{relative componentwise backward error}
\begin{equation}\label{eq.releberr}
        \eta^{\mathrm{elem}}_{|\bp|}(\hz_k)
        = \frac{|p(\hz_k)|}{\sum_{i=0}^d |p_i\hz_k^i|}.
\end{equation}

One of our contributions in this paper is
another choice for the vector of parameters $\ba$, weaker than
$\ba =|\bp|$ but still meaningful.
We rewrite the first upper bound in \eqref{pbound} as
\begin{equation}\label{eq.sum1}
\sum_{i=0}^d |\delp_i| |\hz_k|^i=
\sum_{i=0,p_i\ne 0}^d |\delp_i| |\hz_k|^i +
\sum_{i=0,p_i= 0}^d |\delp_i| |\hz_k|^i,
\end{equation}
and define
$$
j := \arg \max_i | p_i | |\hz_k|^i.
$$
Note that for such $j$, $p_j\ne 0$.
Now the term $|\delp_i||\hz_k|^i$ does not affect the order of
magnitude of the sum in \eqref{eq.sum1} when
\begin{equation}\label{def.betai}
        |\delp_i| |\hz_k|^i \le |\delp_j||\hz_k|^j
        \quad \Longleftrightarrow \quad
        \begin{cases}
        \displaystyle{\frac{1}{\beta_i}\frac{|\delp_i|}{ |p_i|}\le
        \frac{|\delp_j|}{|p_j|}}
        & \mbox{if $p_i\ne 0$,}\\
        \displaystyle{\frac{1}{\beta_i}|\delp_i| \le \frac{|\delp_j|}{|p_j|}}
        &\mbox{otherwise,}\
        \end{cases}
\end{equation}
with
\begin{equation}
\beta_i =  \\
\begin{cases}
\displaystyle{\frac{|p_j| |\hz_k|^j}{|p_i| |\hz_k|^i}} \geq 1 &\mbox{if $p_i\ne 0$,}\\
|p_j||\hz_k|^{j-i} & \mbox{otherwise}.
\end{cases}
\end{equation}
This suggests choosing $\ba = \btg$ to measure the perturbations
in \eqref{def.eberr}, where
$\btg=[\tilde\gamma_0,\ldots,\tilde\gamma_d]$ with
\begin{equation}\label{def.tildealpha}
        \tilde\gamma_i= \left\{\begin{array}{ll}
        \beta_i |p_i| &\mbox{if $p_i\ne 0$,}\\
        \beta_i    &\mbox{otherwise,}
  \end{array}\right.  \qquad i=0,\ldots,d.
\end{equation}
With this choice of parameters,
\begin{itemize}
\item larger perturbations are allowed on coefficients that do not participate
much to the upper bound in \eqref{pbound}, i.e.,  on the modulus of the residual
$p(\hz_k)$, but
\item the sparsity structure of the problem may not preserve, that is, if $p_i=0$
then $\delp_i\ne 0$ is allowed.
\end{itemize}
Note that the entries of $\btg$ depend on $\hz_k$.
Since
$$
\sum_{i=0}^d \widetilde\gamma_i |\hz_k|^i
= (d+1) |p_j\hz_k^j|= (d+1) \max_i |p_i| |\hz_k^i|,
$$
we find that
$$
\eta^{\mathrm{elem}}_{\btg}(\hz_k)
        = \frac{1}{d+1}\cdot\frac{|p(\hz_k)|}{\max_i |p_i| |\hz_k^i|}.
$$
Hence,
\begin{equation}\label{bndberr}
    \eta^{\mathrm{elem}}_{\btg}(\hz_k)\le
    \eta^{\mathrm{elem}}_{|\bp|}(\hz_k)\le
    (d+1)\eta^{\mathrm{elem}}_{\btg}(\hz_k).
\end{equation}

As mentioned in the introduction, we are interested in a global way of
measuring the backward error
for a given (usually computed) approximation $\bhz
= [\hz_1,\ldots,\hz_d]^T$ to all the zeros $\bz=[z_1,\ldots,z_d]^T$ of $p(z)$.
A natural extension of the definition for the backward error
for a single zero provided in \eqref{def.eberr} leads to
\begin{equation}\label{def.eberrmult}
        \eta^{\mathrm{elem}}_{\ba}(\bhz) := \min\{
        \eps: \ p(\bhz)+\delp_\mu(\bhz) =0,\
        |\delbp_\mu|\le \eps \ba\},
\end{equation}
where  $p(\bhz)$ denotes the vector $[p(\hz_1),\ldots,p(\hz_d)]^T$,
$$
        \delp_\mu(z)=\sum_{i=0}^d \delp_{\mu,i} z^i=p(z)-\mu\widetilde p(z),
        \quad\mu\in\C\setminus\{0\},
$$
with
\begin{equation}\label{def.pt}
\widetilde p(z) = p_\deg\prod_{j=1}^d (z-\hz_j)
=\widetilde{p}_\deg z^\deg+\widetilde p_{\deg-1} z^{\deg-1}+\cdots+
\widetilde{p}_1z+\widetilde{p}_0
\end{equation}
fixed, and $\delbp_\mu=[\delp_{\mu,0},\ldots,\delp_{\mu,\deg}]$ with
$\delp_{\mu,i}=p_i-\mu\widetilde{p}_i$.
Then
\begin{equation}\label{eq.elberr}
\eta^{\mathrm{elem}}_{\ba}(\bhz) =
\begin{cases}
\displaystyle{\min_{\mu\in\C\setminus\{0\}}
\ \max_{i, \alpha_i \neq 0} \frac{| \delp_{\mu,i} |}{\alpha_i }}
&\mbox{if $\delp_{\mu,i}=0$ when $\alpha_i=0$},\\
\infty &\mbox{otherwise}.
\end{cases}
\end{equation}
When $p(z)$ has real coefficients and the approximate roots $\bhz$ are symmetric
with respect to the real axis, then the coefficients of $\tp(z)$ are real and we
can we can minimize over $\mu\in\R\setminus\{0\}$ in \eqref{eq.elberr}.
Then in this case, \eqref{eq.elberr} can be rewritten as a linear programming
problem that can be solved by the simplex method.

When $\ba=|\bp|$ the elementwise relative backward error in
\eqref{eq.releberr} is just an upper bound on \eqref{eq.elberr} corresponding to
choosing $\mu=1$.

Let us define
\begin{equation}\label{def.gammai}
\gamma_i = \min_{|z| \geq 0} \beta_i(z), \qquad
\beta_i(z) = \left\{
  \begin{array}{ll}
  \displaystyle{\max_{0\le j\le d}
  \frac{|p_jz^j|}{|p_iz^i|}} &\mbox{if $p_i \neq 0$,}\\
  \displaystyle{\max_{0\le j\le d}
  \frac{|p_jz^j|}{|z|^i}} & \mbox{if $p_i = 0$},
  \end{array}\right.
\qquad i=0,\ldots,d,
\end{equation}
and consider the generalization to $\btg$ in
\eqref{def.tildealpha},
\begin{equation}\label{def.gammat}
        \widetilde\gamma_i= \left\{\begin{array}{ll}
         \gamma_i |p_i| &\mbox{if $p_i\ne 0$,}\\
         \gamma_i  &\mbox{if $p_i= 0$,}
  \end{array}\right.
  \qquad i=0,\ldots,d.
\end{equation}
\begin{definition}[Min-max elementwise backward error]\label{def01}
The min-max elementwise backward error for
the approximate zeros
$\bhz=[\hz_1,\ldots,\hz_d]^T$ of $p(z) = \sum_{i=0}^\deg p_i z^i$
is
$\eta^{\mathrm{elem}}_{\btg}(\bhz)$ in~\eqref{def.eberrmult}
with $\btg=[\widetilde\gamma_0,\ldots,\widetilde\gamma_\deg]$ defined  in~\eqref{def.gammat}.
\end{definition}
The naming \emph{min-max} comes from the min-max characterization of the
$\gamma_i$ in \eqref{def.gammai}.
Note that the latter
do not depend on the approximate zeros $\hz_k$, $k=1,\ldots,d$.
Because $\gamma_i\ge 1$ if $p_i\ne 0$, our choice for $\widetilde\gamma_i$  allows a
larger relative error on those coefficients having a corresponding $\gamma_i$
larger than one. For $p_i=0$, we allow
a certain absolute error without changing the order of magnitude of the upper
bound (\ref{pbound}) for any of the zeros $\hz_k$.
This does not preserve
sparsity in the data but unlike for the elementwise relative backward error, the
min-max elementwise backward error is always finite. Indeed, we will show in the
next section that for $p_i=0$, $\gamma_i>0$ so $\widetilde\gamma_i\ne 0$.

\section{Connection between the $\gamma_i$ and the tropical roots}\label{sec_compute_gamma}


Based on the polynomial $p(z)$, consider the corresponding tropical polynomial $\tpol (x)$ in the max-times
semiring $\Rmaxt$ consisting of the set of nonnegative real numbers $\Rp$
with the operations $\tplus$ and $\tmult$. The $\tplus$ operation is taking the maximum value of the terms and the $\tmult$
operation is the classical multiplication.
The tropical polynomial $\tpol (x)$ based on $p(z)$ is defined as
\begin{equation}\label{def.tp}
\tpol : \Rp \rightarrow \Rp: x \mapsto
\bigoplus_{i=0}^\deg |p_i| x^i = \max_{0\le i\le \deg} |p_i| x^i .
\end{equation}
If $p_0 = p_1 = \cdots = p_{m-1} = 0$, then
zero is a tropical root of $\tpol (x)$ with multiplicity $m_0=m$ ($m_0=0$ is
$p_0\ne 0$).
The (nonzero) tropical roots are points $x$ in $\Rp$ at which the maximum in
\eqref{def.tp} is attained for at least two values of $i$ for
this specific $x$-values \cite{abg14}.
Since $p_\deg \neq 0$, there are $t\le d$ distinct tropical roots
$$
0<\tr_1<\cdots<\tr_t
$$
with $\tr_\ell$ of multiplicity $m_\ell$,
and $\sum_{\ell=0}^t m_\ell = \deg$.
These tropical roots can be computed from
the Newton polygon associated with $p(z)$,
i.e., the upper boundary of the convex hull of the set of points
$(j,\log|p_j|)$, $j = 0,1,\ldots,\deg$ resulting in the points
$(k_\ell,\log|p_{k_\ell}|)$, $\ell=1,\ldots,t$
with
$$
        k_0 = 0 < k_1 <\cdots< k_{t-1} < k_t = \deg.
$$
The opposites of the slopes of the segments
of this upper boundary are the logarithm of the tropical roots. Hence, if
$(k_{\ell-1}, \log|p_{k_{\ell-1}}|)$ and $(k_{\ell}, \log|p_{k_{\ell}}|)$ are the two
endpoints of such a segment, it follows that
\begin{equation}\label{def.troproot}
\tr_\ell = \bigg(\frac{|p_{k_{\ell-1}}|}{|p_{k_\ell}|}\bigg)^{1/m_\ell},
\quad m_\ell = k_\ell - k_{\ell-1}, \quad \ell=1,2,\ldots,t.
\end{equation}
These tropical roots can be computed in $\OO(\deg)$ operations
\cite[Prop.~1]{r957}. For more details on tropical roots and corresponding
applications in polynomial eigenvalue problems, we refer the interested reader
to \cite{r958,r957,hmt13,r953,d665,VBT2017}.

When computing the zeros of a polynomial $p(z)$, we assume that $p_0
\neq 0$, i.e., the possible roots at zero are already divided out exactly.
Hence, all tropical roots $\tr_l$ will be nonzero.

\begin{theorem}\label{thm.gamma}
Given a polynomial $p(z)=\sum_{i=0}^d p_i z^i$
and its associated tropical
polynomial $\tpol(x)=\max_{0\le i\le d} |p_i| x^i
=\max_{0\le i\le t} |p_{k_i}| x^{k_i}$,
with $0=k_0 < k_1 <\cdots< k_t = \deg$ and
tropical roots $\tr_\ell$, $\ell =1,2,\ldots,t$
as in \eqref{def.troproot},
we have that for each $i\in\{1,\ldots,\deg\}$ and corresponding $\ell$
such that $k_{\ell-1} \leq i \leq k_\ell$,
the parameters $\gamma_i$ in \eqref{def.gammai} are
given by
$$
\gamma_i =
\begin{cases}
\displaystyle{\frac{\tr_\ell^{k_{\ell-1}}|p_{k_{\ell-1}}|}{\tr_\ell^i |p_i|}}
= \frac{\tr_\ell^{k_{\ell}}|p_{k_{\ell}}|}{\tr_\ell^i |p_i|}
& \mbox{if $p_i\ne 0$},\\ \mystrut{.8cm}
\displaystyle{\frac{\tr_\ell^{k_{\ell-1}}|p_{k_{\ell-1}}|}{\tr_\ell^i} =
\frac{\tr_\ell^{k_{\ell}}|p_{k_{\ell}}|}{\tr_\ell^i}}
&\mbox{if $p_i= 0$}.
\end{cases}
$$
\end{theorem}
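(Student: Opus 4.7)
The proof hinges on analyzing the piecewise-monomial structure of the tropical polynomial $\tpol(x) = \max_{0\le j\le d} |p_j| x^j$. Extending the indexing with $\tr_0 := 0$ and $\tr_{t+1} := +\infty$, the plan is to first establish the well-known fact that on each interval $[\tr_m,\tr_{m+1}]$ a single monomial dominates, namely
\[
\tpol(x) = |p_{k_m}|\,x^{k_m}\qquad\text{for all }x\in[\tr_m,\tr_{m+1}],\ m=0,1,\ldots,t.
\]
This is a direct consequence of the Newton-polygon construction of the $\tr_\ell$: for $x<\tr_{m+1}$ the exponent $k_m$ beats $k_{m+1}$, and for $x>\tr_m$ it beats $k_{m-1}$. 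A small consequence, used later, is that at the breakpoint $x=\tr_\ell$ the two neighbouring monomials coincide:
\[
\tpol(\tr_\ell)=|p_{k_{\ell-1}}|\tr_\ell^{k_{\ell-1}}=|p_{k_\ell}|\tr_\ell^{k_\ell}.
\]

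Next, fix $i$ and the unique (up to boundary ambiguity) index $\ell$ with $k_{\ell-1}\le i\le k_\ell$. Assume first $p_i\ne 0$. On each interval $[\tr_m,\tr_{m+1}]$ the function $\beta_i$ reduces to a single power law:
\[
\beta_i(x)=\frac{|p_{k_m}|}{|p_i|}\,x^{\,k_m-i}.
\]
The key monotonicity step is then a clean case split: when $m\le \ell-1$ we have $k_m\le k_{\ell-1}\le i$, hence the exponent $k_m-i$ is $\le 0$ and $\beta_i$ is non-increasing on that interval; when $m\ge \ell$ we have $k_m\ge k_\ell\ge i$, so the exponent is $\ge 0$ and $\beta_i$ is non-decreasing. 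Since $\beta_i$ is continuous at the breakpoints, this gives the global shape: $\beta_i$ decreases on $(0,\tr_\ell]$ and increases on $[\tr_\ell,+\infty)$, so $\tr_\ell$ is a global minimizer and
\[
\gamma_i=\beta_i(\tr_\ell)=\frac{\tpol(\tr_\ell)}{|p_i|\,\tr_\ell^i}.
\]
Substituting the two equivalent forms of $\tpol(\tr_\ell)$ above yields the stated expressions.

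The case $p_i=0$ proceeds identically, with $|p_i|$ replaced by $1$ in the denominator; the monotonicity argument uses exactly the same exponents $k_m-i$. The only subtlety, and the step I would flag as the main thing to get right, is boundary behaviour when $i$ coincides with some $k_{\ell-1}$ or $k_\ell$: then two choices of $\ell$ are admissible, and the two resulting formulas must agree. I would verify this by direct substitution using $\tr_\ell^{k_\ell-k_{\ell-1}}=|p_{k_{\ell-1}}|/|p_{k_\ell}|$, which immediately shows that both candidates evaluate to the same number (in fact to $1$ when $p_i\ne 0$ and $i=k_{\ell-1}$ or $i=k_\ell$). No technical obstacle beyond careful bookkeeping of the monotonicity per interval; the argument does not require calculus beyond observing the sign of $k_m-i$.
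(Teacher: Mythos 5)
Your proof is correct and follows essentially the same route as the paper's: both show that $\beta_i$ attains its global minimum at $\tr_\ell$ by exploiting the fact that on each interval between consecutive tropical roots the dominant term of $\tpol$ has exponent $k_m$ satisfying $k_m-i\le 0$ to the left of $\tr_\ell$ and $k_m-i\ge 0$ to the right. Your explicit piecewise-monotonicity formulation, and your check that the two admissible choices of $\ell$ at a vertex $i=k_\ell$ yield the same value (namely $1$), are slightly more carefully spelled out than the paper's direct comparison $\beta_i(z)\ge\beta_i(\tr_\ell)$, but the underlying argument is the same.
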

\begin{proof}
If $p_i\ne 0$, then by \eqref{def.gammai},
$\gamma_i = \min_{|z| > 0} \beta_i(z)$ with
$\beta_i(z) =\max_j |p_j z^j|/|p_i z^i|$.
Hence,
$$
\beta_i(\tr_\ell) = \frac{\max_j |p_j \tr_\ell^j|}{|p_i \tr_\ell^i|} = \frac{ | p_{k_{\ell-1}} | \tr_\ell^{k_{\ell-1}}}{|p_i| \tr_\ell^i}
= \frac{| p_{k_{\ell}} | \tr_\ell^{k_{\ell}}}{|p_i| \tr_\ell^i}.
$$
Suppose that $\tr_{\ell'-1} \leq |z| \leq \tr_{\ell'} \leq \tr_\ell$, then we get
\begin{eqnarray*}
\beta_i(z) &=& \frac{\max_j |p_j z^j|}{|p_i z^i|}
= \frac{|p_{k_{\ell'-1}} z^{k_{\ell'-1}} |}{|p_i z^i|}
\geq \frac{|p_{k_{\ell-1}} z^{k_{\ell-1}} |}{|p_i z^i|}
= \frac{|p_{k_{\ell-1}}|}{|p_i|} \frac{1}{|z|^{i-k_{\ell-1}}} \\
&\geq& \frac{|p_{k_{\ell-1}}|}{|p_i|} \frac{1}{\tr_\ell^{i-k_{\ell-1}}}
= \beta_i(\tr_\ell) .
\end{eqnarray*}
A similar argument can be followed when
$\tr_\ell \leq \tr_{\ell'} \leq |z| \leq \tr_{\ell'+1}$ to prove that also in this case
$\beta_i(z) \geq \beta_i(\tr_\ell)$.
Hence, the minimum value for $\beta_i(z)$ is for $z = \tr_\ell$.
This gives us the expression for $\gamma_i$.
The expression when $p_i = 0$ is proved in a similar way.
\end{proof}
When $p_0\ne 0$, there are no zero tropical roots and no indices $k_\ell$ from
the Newton polygon such that $p_{k_\ell}=0$ so $\gamma_i>0$ for all $i$.
%
Instead of computing the min-max elementwise backward error
$$
\berrc(\bhz)
=\min_{\mu\in\C\setminus\{0\}}
\ \max_{0\le i\le \deg} \frac{|p_i-\mu\tp_i|}{\widetilde\gamma_i},
$$
it is easier to compute the upper bound $\berrc$ given by
\begin{equation}\label{ineq.upperbnd}
  \berrc= \max_{0<i\le \deg} \frac{|p_i-\tp_i|}{\widetilde\gamma_i}
\end{equation}
for which the coefficients $\tp_i$ are obtained by constructing
$\tp(z)$ in \eqref{def.pt} using multiple precision.

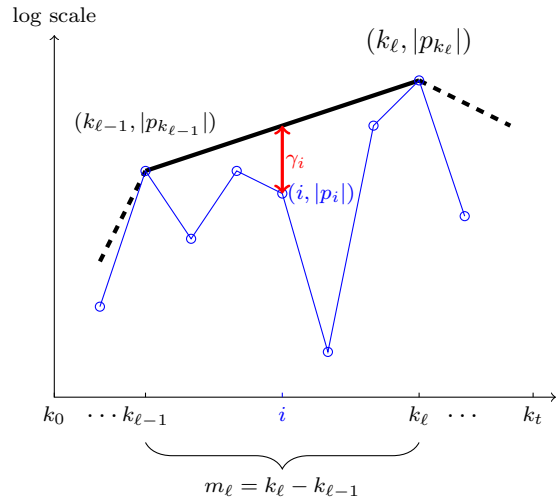
\begin{figure}
\centering
\begin{tikzpicture}[scale=0.6]

\draw[<->] (11,1) -- (0,1) -- (0,9) node[above]{\footnotesize$\log$ scale};

\draw[ultra thick](2,6) node[above,yshift = 0.3cm]
{\footnotesize$(k_{\ell-1},|p_{k_{\ell-1}}|)$} --
(8,8) node[above,yshift = 0.2cm]{$(k_{\ell},|p_{k_{\ell}}|)$};
\draw[dashed,ultra thick] (1,4) -- (2,6);
\draw[dashed,ultra thick] (8,8) -- (10,7);

\draw (2,1.1) -- (2,1) node[below]{\footnotesize$k_{\ell-1}$};
\draw (8,1.1) -- (8,1) node[below]{\footnotesize$k_{\ell}$};
\draw (0,1.1) -- (0,1) node[below]{\footnotesize$k_0$};
\draw (10.5,1.1) -- (10.5,1) node[below]{\footnotesize$k_t$};
\draw [blue] (5,1.1) -- (5,1) node[below]{\footnotesize$i$};
\node at (1.1,0.6) {\ldots};
\node at (9,0.6) {\ldots};
\draw [decorate,decoration={brace,amplitude=10pt,mirror}]
(2,0) -- (8,0) node[black,midway,yshift=-0.6cm]
{\footnotesize $m_\ell = k_\ell-k_{\ell-1}$};

\draw[blue] (1,3) -- (2,6) -- (3,4.5) -- (4,6) -- (5,5.5) -- (6,2) -- (7,7) -- (8,8) -- (9,5);
\draw [blue] (1,3) circle [radius=0.1];
\draw [blue] (2,6) circle [radius=0.1];
\draw [blue] (3,4.5) circle [radius=0.1];
\draw [blue] (4,6) circle [radius=0.1];
\draw [blue] (5,5.5) circle [radius=0.1] node[right]{\!\footnotesize$(i,|p_i|)$};
\draw [blue] (6,2) circle [radius=0.1];
\draw [blue] (7,7) circle [radius=0.1];
\draw [blue] (8,8) circle [radius=0.1];
\draw [blue] (9,5) circle [radius=0.1];

\draw [<->, very thick, red] (5,5.5) -- (5,7);
\node [red] at (5.3,6.2) {\footnotesize$\gamma_i$};

\end{tikzpicture}
\caption{The factor $\gamma_i$ as fraction of a point on the convex hull and the modulus of the corresponding polynomial
coefficient $|p_i|$.\label{fig951}}
\end{figure}

In Figure~\ref{fig951} the parameter $\gamma_i$ is graphically indicated as the
fraction of the value of the convex hull and the modulus of the corresponding
polynomial coefficient $|p_i|$. Note that $\gamma_{k_\ell}=1$,
$\ell=0,\ldots,t$, which in the log scale on Figure~\ref{fig951} corresponds to
zero.
The min-max backward error is equal to $\epsilon$ when each absolute error
$|\Delta p_i|$ is $\eps$ times smaller than the convex hull and when there is
one or more of these absolute errors just $\eps$ times smaller. In
Figure~\ref{figaaa} (left) this is shown when the value $|p_i|$ is between the
corresponding point on the convex hull and $\eps$ times smaller.
Here $\Delta p_i$ is very small so $p_i\approx \widetilde p_i$.
Figure~\ref{figaaa} (right) illustrates the case when $|p_{i'}|$ is less than
$\eps$ times the corresponding point on the convex hull. Here
$p'_i$ is very small so $\Delta p'_i=\widetilde p'_i$.

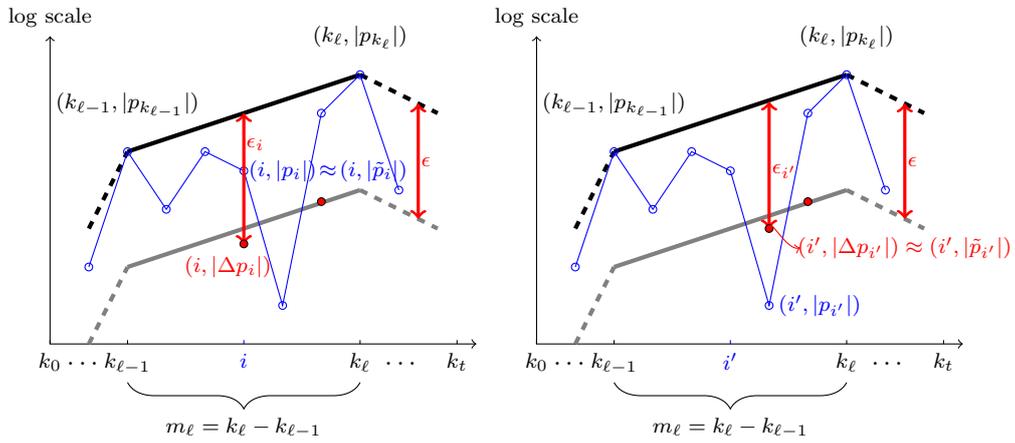
\begin{figure}
\centering
\begin{tikzpicture}[scale=0.51]

\draw[<->] (11,1) -- (0,1) -- (0,9) node[above]{\footnotesize$\log$ scale};

\draw[ultra thick](2,6) node[above,yshift = 0.3cm]
{\footnotesize$(k_{\ell-1},|p_{k_{\ell-1}}|)$} --
(8,8) node[above,yshift = 0.2cm]{\footnotesize$(k_{\ell},|p_{k_{\ell}}|)$};
\draw[dashed,ultra thick] (1,4) -- (2,6);
\draw[dashed,ultra thick] (8,8) -- (10,7);

\draw (2,1.1) -- (2,1) node[below]{\footnotesize$k_{\ell-1}$};
\draw (8,1.1) -- (8,1) node[below]{\footnotesize$k_{\ell}$};
\draw (0,1.1) -- (0,1) node[below]{\footnotesize$k_0$};
\draw (10.5,1.1) -- (10.5,1) node[below]{\footnotesize$k_t$};
\draw [blue] (5,1.1) -- (5,1) node[below]{\footnotesize $i$};
\node at (.9,0.5) {\ldots};
\node at (9.1,0.5) {\ldots};
\draw [decorate,decoration={brace,amplitude=10pt,mirror}]
(2,0) -- (8,0) node[black,midway,yshift=-0.6cm]
{\footnotesize $m_\ell = k_\ell-k_{\ell-1}$};

\draw[gray,ultra thick](2,3) -- (8,5);
\draw[gray,dashed,ultra thick] (1,1) -- (2,3);
\draw[gray,dashed,ultra thick] (8,5) -- (10,4);

\draw[blue] (1,3) -- (2,6) -- (3,4.5) -- (4,6) -- (5,5.5) -- (6,2) -- (7,7) -- (8,8) -- (9,5);
\draw [blue] (1,3) circle [radius=0.1];
\draw [blue] (2,6) circle [radius=0.1];
\draw [blue] (3,4.5) circle [radius=0.1];
\draw [blue] (4,6) circle [radius=0.1];
\draw [blue] (5,5.5) circle [radius=0.1]node[right]
{\!\footnotesize$(i,|p_i|)\! \approx\! (i,|\tilde{p}_i|)$};
\draw [blue] (6,2) circle [radius=0.1];
\draw [blue] (7,7) circle [radius=0.1];
\draw [blue] (8,8) circle [radius=0.1];
\draw [blue] (9,5) circle [radius=0.1];

\draw [<->, very thick, red] (9.5,7.25) -- (9.5,4.25);
\node [red] at (9.7,5.75) {\footnotesize$\epsilon$};
\draw [<->, very thick, red] (5,3.6) -- (5,7);
\node [red] at (5.3,6.2) {\footnotesize$\epsilon_i$};

\draw [fill=red] (5,3.6) circle [radius=0.1];
\node [red,yshift=-0.3cm,xshift=0.3cm] at (4,3.6) {\footnotesize$(i,|\Delta p_i|)$};

\draw [fill=red] (7,4.7) circle [radius=0.1];
\end{tikzpicture}
\begin{tikzpicture}[scale=0.51]

\draw[<->] (11,1) -- (0,1) -- (0,9) node[above]{\footnotesize$\log$ scale};

\draw[ultra thick](2,6) node[above,yshift = 0.3cm]
{\footnotesize$(k_{\ell-1},|p_{k_{\ell-1}}|)$} -- (8,8)
node[above,yshift = 0.2cm]{\footnotesize$(k_{\ell},|p_{k_{\ell}}|)$};
\draw[dashed,ultra thick] (1,4) -- (2,6);
\draw[dashed,ultra thick] (8,8) -- (10,7);

\draw (2,1.1) -- (2,1) node[below]{\footnotesize$k_{\ell-1}$};
\draw (8,1.1) -- (8,1) node[below]{\footnotesize$k_{\ell}$};
\draw (0,1.1) -- (0,1) node[below]{\footnotesize$k_0$};
\draw (10.5,1.1) -- (10.5,1) node[below]{\footnotesize$k_t$};
\draw [blue] (5,1.1) -- (5,1) node[below]{\footnotesize$i'$};
\node at (.9,0.5) {\ldots};
\node at (9.1,0.5) {\ldots};
\draw [decorate,decoration={brace,amplitude=10pt,mirror}]
(2,0) -- (8,0) node[black,midway,yshift=-0.6cm]
{\footnotesize $m_\ell = k_\ell-k_{\ell-1}$};

\draw[gray,ultra thick](2,3) -- (8,5);
\draw[gray,dashed,ultra thick] (1,1) -- (2,3);
\draw[gray,dashed,ultra thick] (8,5) -- (10,4);

\draw[blue] (1,3) -- (2,6) -- (3,4.5) -- (4,6) -- (5,5.5) -- (6,2) -- (7,7) -- (8,8) -- (9,5);
\draw [blue] (1,3) circle [radius=0.1];
\draw [blue] (2,6) circle [radius=0.1];
\draw [blue] (3,4.5) circle [radius=0.1];
\draw [blue] (4,6) circle [radius=0.1];
\draw [blue] (5,5.5) circle [radius=0.1];
\draw [blue] (6,2) circle [radius=0.1] node[right]{\footnotesize$(i',|p_{i'}|)$};
\draw [blue] (7,7) circle [radius=0.1];
\draw [blue] (8,8) circle [radius=0.1];
\draw [blue] (9,5) circle [radius=0.1];

\draw [<->, very thick, red] (9.5,7.25) -- (9.5,4.25);
\node [red] at (9.7,5.75) {\footnotesize$\epsilon$};
\draw [<->, very thick, red] (6,4.0) -- (6,7.3);
\node [red] at (6.4,5.5) {\footnotesize$\epsilon_{i'}$};

\draw [fill=red] (6,4.0) circle [radius=0.1];
\node [red] at (9.5,3.5) {\footnotesize$(i',|\Delta p_{i'}|) \approx (i', |\tilde{p}_{i'}|)$};

\draw [red,->] (6,4.0) to [out=90,in=195](6.8,3.5);
\draw [fill=red] (7,4.7) circle [radius=0.1];
\end{tikzpicture}
\caption{The min-max backward error is $\eps$.
On the left, the value of $|p_i|$ is within the band between the convex hull and
$\eps$ times the convex hull.
On the right, the value of $|p_{i'}|$ is less than $\eps$ times the convex hull
\label{figaaa}}
\end{figure}



\begin{example}[Mastronardi and Van Dooren's example~\cite{MVD2015}]
Let $u$ denote the unit roundoff and consider the polynomial
$$
p(z) = z^2 -2\beta z -1
$$
with zeros $z_{1,2} = \beta \pm \sqrt{\beta^2+1}$, where $\beta = 2^{-t} +
2^{-2t}$ with $2^{-2t} \leq u/2$ and $2^{-t} \approx \sqrt{u}$.
These zeros are well conditioned.
The approximations
$\hz_{1,2} = 2^{-t}\pm 1$ to the zeros $z_{1,2}$ both have a
relative error of order $u$.
These approximations
are the exact zeros of the polynomial
$$
\tp(z) = z^2 - 2^{-t+1} z +2^{-2t}-1 .
$$
For the polynomial $p(z)$, it is easy to check that
$$
\gamma_0 = \gamma_2 = 1 \quad\mbox{and}\quad \gamma_1 = (2\beta)^{-1} = \OO(u^{-1/2}) \gg 1.
$$
This results in the backward errors
$$
\berrb(\bhz) = \min_{\mu\in\R}\max_{0\le i\le 2}
\frac{|\tp_i - p_i|}{|p_i|} \approx u^{1/2},
\quad
\berrc(\bhz) = \min_{\mu\in\R} \max_{0\le i\le 2}\frac{|\tp_i - p_i|}{|p_i| \gamma_i}
= 2^{-2t+1} \leq u.
$$
Other approximations $\hz_1$ and $\hz_2$ of the zeros with
a relative error of the size of the unit roundoff lead to similar results.

\end{example}

\section{A root solver based on companion linearization}\label{sec_algo}

In this section we describe an algorithm for computing all the zeros of a
polynomial $p(z) = \sum_{i=0}^\deg p_i z^i$. It is based on a companion
linearization of the grade $d+1$ polynomial $0\cdot z^{d+1}+p(z)$,
a suitable scaling/balancing of the
linearization, and the use of a QZ algorithm with an appropriate deflation
strategy for the eigenvalues at infinity.
The min-max elementwise backward error of the resulting algorithm is studied in
Section~\ref{sec_be}.


We transform the original problem, i.e., $p(z) =0$  into the generalized eigenvalue problem
$$
\begin{bmatrix}
p_{\deg} & p_{\deg-1} & \cdots & p_1 & p_0 \\
1 & -z & && \\
  & 1 & -z & & \\
  & & \ddots & \ddots & \\
  & & & 1 & -z
\end{bmatrix}
\begin{bmatrix}
z^{\deg} \\
z^{\deg-1} \\
\vdots \\
z \\
1
\end{bmatrix}
=
\begin{bmatrix}
p(z) \\
0 \\
\vdots \\
0 \\
0
\end{bmatrix}.
$$
The pencil on the left of the equality, which we write $C(z)=A-z B$ with
\begin{equation}\label{def.compa}
A = \begin{bmatrix}
p_{\deg} & p_{\deg-1} & \cdots & p_1 & p_0 \\
1 &  & && \\
  & 1 &  & & \\
  & & \ddots &  & \\
  & & & 1 &
\end{bmatrix},
\qquad
B =
\begin{bmatrix}
0 &  &  &  &  \\
 & 1 & && \\
  & &  1& & \\
  & &  & \ddots & \\
  & & &  & 1
\end{bmatrix}
\end{equation}
is the  $(\deg+1)\times (\deg+1)$ companion linearization of $0\cdot z^{d+1}+p(z)$. It has an eigenvalue at
infinity and its finite eigenvalues are the roots of $p(z)$.
We then apply a two-sided diagonal scaling to the pencil $C(z)$,
$$\hC(z) = D_l C(z) D_r=\hA - z \hB$$
with diagonal matrices $D_l,D_r$ constructed such that
$\hA$ is balanced in the sense that its nonzero entries are in modulus bounded
by $1$ and the diagonal of $\hB$ is graded.
This is done as follows. We define
\begin{equation}\label{def.ttr}
        \ttr_1,\ttr_2,\ldots,\ttr_d :=
        \underbrace{\tr_1,\ldots,\tr_1}_{\mbox{\footnotesize$m_1$ times}},
        \underbrace{\tr_2,\ldots,\tr_2}_{\mbox{\footnotesize$m_2$ times}},
        \ldots,
        \underbrace{\tr_t,\ldots,\tr_t}_{\mbox{\footnotesize$m_t$ times}}
\end{equation}
with $\tr_\ell$ as in \eqref{def.troproot} and construct
\begin{equation}\label{def.Dl}
D_l = \diag\biggl(\frac{1}{|p_\deg|}, 1, \prod_{j=\deg}^\deg \ttr_j,
\prod_{j=\deg-1}^\deg \ttr_j, \ldots, \prod_{j=2}^\deg \ttr_j\biggr)
\end{equation}
and
\begin{equation}\label{def.Dr}
D_r = \diag\biggl(1, 1/\prod_{j=\deg}^\deg \ttr_j,
1/\prod_{j=\deg-1}^\deg \ttr_j, \ldots, 1/\prod_{j=1}^\deg \ttr_j\biggr).
\end{equation}
The matrices of the scaled pencil $\hC(z) = \hA - z \hB$ have the form
$$
\hA =
\begin{bmatrix}
\ha_{\deg} & \ha_{\deg-1} & \cdots & \ha_1 & \ha_0 \\
1 &  & && \\
  & 1 &  & & \\
  & & \ddots &  & \\
  & & & 1 &
\end{bmatrix},
\qquad
\hB =
\begin{bmatrix}
0 &  &  &  &  \\
 & \hb_{1} & && \\
  & &  \hb_{2} & & \\
  & &  & \ddots & \\
  & & &  & \hb_{\deg}
\end{bmatrix}
$$
with
$$
|\ha_i| =
\begin{cases}
\gamma_i^{-1} & \mbox{if $ p_i \neq 0$}, \\
0 &\mbox{if $p_i = 0$},
\end{cases}
\qquad\qquad
| \hb_i | = \ttr_{\deg-i+1}^{-1},
$$
and $\gamma_i$ as in Theorem~\ref{thm.gamma}.
Now the parameters $\gamma_{k_\ell}$, $\ell = 1,2,\ldots,t$ corresponding
to the indices of the Newton polytope are equal to one. The other $\gamma_i$ values
are larger than one, i.e., $\gamma_i^{-1}$ is smaller than one for these other
values. Hence, all elements of the top row of the transformed matrix $\hA$ are
in modulus less than or equal to one. The nonzero diagonal part of $\hB$
contains the inverses of the tropical roots and since $\ttr_j\le \ttr_{j+1}$,
we have that $\hb_j\le \hb_{j+1}$.
Note that once the tropical roots are
computed, the factors $\gamma_i$ naturally appear as inverses of the modulus of
the corresponding elements $\ha_i$ when $p_i \neq 0$.

The trivial eigenvalue at infinity is easily deflated:
let $G$ be a $2\times 2$ Givens rotation such that
$G\twobyone{\ha_\deg}{1} =\twobyone{\check{a}_\deg}{0}$
and embed it in $I_{\deg+1}$ as the $2\times 2$ leading block to form
$\widetilde G$.
Then
\begin{equation}\label{def.wAwB}
\wA=\widetilde{G}\hA =
\begin{bmatrix}
\breve{a}_{\deg} & \breve{a}_{\deg-1} & \cdots & \breve{a}_1 & \breve{a}_0 \\
0 & \widetilde{a}_{\deg-1} & \cdots & \widetilde{a}_1 & \widetilde{a}_0 \\
  & 1 &  & & \\
  & & \ddots &  & \\
  & & & 1 &
\end{bmatrix},
\qquad
\wB=\widetilde{G}\hB =
\begin{bmatrix}
0 &\breve{b}_1  &  &  &  \\
 & \widetilde b_{1} & && \\
  & &  \hb_{2} & & \\
  & &  & \ddots & \\
  & & &  & \hb_{\deg}
\end{bmatrix}.
\end{equation}
The roots of $p(z)$ are the $d$ (finite) eigenvalues of the trailing
$\deg\times\deg$ submatrix of $\wA- z\wB$.
The $\deg\times \deg$ trailing submatrix of $\wA$
remains well-balanced with entries in the first row
that are of modulus less or equal to $\sqrt{2}/2$.
The diagonal entries of $\widetilde B$ remained ordered with increasing moduli.

Large differences in the magnitude of the entries of $\wB$ generically lead to
a large difference in the magnitude of the eigenvalues of $\wA-z \wB$.
As already observed in \cite[Section 3]{VBT2017}, when the difference in the
entries of $\wB$ is larger than $\emach^{-1}$, $\emach$ being the machine precision,
the LAPACK implementation of the QZ algorithm may decide too quickly to deflate
an eigenvalue and declare it to be at infinity.
So we slightly modify the LAPACK routines \texttt{xHGEQZ} such that besides
the trivial eigenvalues at infinity only finite eigenvalues
are generated. Note that the latter can be very large when they correspond to
exact infinite eigenvalues.
To be more specific we replace the value of \texttt{BTOL} by the smallest
positive nonzero floating point number in strategic places in the fortran code
as to avoid that a specific entry of $\wB$ is explicitly set to zero,
thereby leading to a computed infinite eigenvalue.
At the same time, we also increase the maximum number of iterations MAXIT.
The steps of our approach are summarized in Algorithm~\ref{alg01}.
\begin{algorithm}
\caption{Computes all zeros of a polynomial $p(z) = \sum_{i=0}^\deg p_i z^i$\label{alg01}}
\begin{algorithmic}
     \State{Input: the coefficients $p_i$, $i=0,1,\ldots,\deg$ of $p(z)$}.
     \State{Output: the $d$ zeros of $p(z)$}.
     \State{}
     \State{Construct the companion pencil $C(z) = A - z B$ of $p(z)$ as in
     \eqref{def.compa}}.
     \State{Compute the tropical roots $\ttr_j$, $j=1,2,\ldots,\deg$ as in
     \eqref{def.ttr}}.
     \State{Scale/balance the companion pencil: $\hC(z) = D_l C(z) D_r$
       with $D_l,D_r$ as in \eqref{def.Dl}--\eqref{def.Dr}}.
     \State{Deflate the eigenvalue at infinity with
     a Givens rotation as in \eqref{def.wAwB}}.
      \State{Compute the eigenvalues of the deflated pencil using
      the QZ algorithm implemented with a strict deflation criterion for the
      detection of eigenvalues at infinity.}
      \State{Return these eigenvalues as roots of $p(z)$.}
\end{algorithmic}
\end{algorithm}

In Section~\ref{sec_be}, we show that under certain assumptions on the graded
character of the backward error for the generalized Schur form obtained after
applying a QZ algorithm with strict deflation criterion for eigenvalues
at infinity,
Algorithm~\ref{alg01} is min-max elementwise backward stable according to
Definition \ref{def01}. In Section~\ref{sec_num_exp} several numerical
experiments will be given illustrating the backward stable behaviour of the
newly designed algorithm.

\section{Min-max backward error for Algorithm~\ref{alg01}}\label{sec_be}
In \cite[Section~6]{VBT2017} we gave numerical evidence for the following
assumption that is required for our backward error analysis of Algorithm~\ref{alg01}.
\begin{assumption}\label{ass01}
The QZ algorithm with a strict deflation criterion for eigenvalues at infinity
applied to $\widetilde{G}(\hA-z\hB)$ in \eqref{def.wAwB} computes the exact generalized
Schur form of the matrix pencil
$$
(\hA + \dhA) - z (\hB + \dhB),
$$
where all entries in $\dhA$ have  modulus of size
$\OO(\emach)$, $\emach$ being the machine precision,
and the entries in column $i$ of $\dhB$ have modulus of
size
$\OO(\ttr_{\deg-i+2}^{-1}\emach)$, with $\ttr_i$ as in \eqref{def.ttr},
except for the first column which is equal to zero.
\end{assumption}
It follows then that under  Assumption~\ref{ass01} the backward error on the matrix
$\hB$ has a graded structure if $\hB$ is graded since $\ttr_1\le \ttr_2\le\cdots\le\ttr_d$.

\begin{theorem}
Algorithm~\ref{alg01} applied to $p(z)=\sum_{i=0}^\deg p_iz^i$ is min-max elementwise backward stable
under Assumption~\ref{ass01}, that is, it computes roots
$\bhz=[\hz_1,\ldots,\hz_d]^T$ of $p(z)$ with min-max elementwise backward error
$\berrc(\bhz)=\OO(\emach)$.
\end{theorem}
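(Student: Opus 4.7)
The plan is to propagate the backward error guaranteed by Assumption~\ref{ass01} on the scaled pencil $\widetilde G(\hA-z\hB)$ all the way back to a coefficient-wise perturbation of $p(z)$ of size controlled by $\widetilde\gamma_i$. Concretely, I will construct a polynomial $\tq(z)$ of degree $d$ whose roots are exactly the computed $\hz_k$ and whose coefficients differ from those of $p(z)$ by $\OO(\emach)\widetilde\gamma_i$, then rescale it to match the polynomial $\tp(z)$ of Definition~\ref{def01}.

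First, by Assumption~\ref{ass01} the computed roots $\bhz$ are the finite eigenvalues of $(\hA+\dhA)-z(\hB+\dhB)$, where $|(\dhA)_{ij}|=\OO(\emach)$ and the $i$-th column of $\dhB$ is bounded in modulus by $\OO(\ttr_{d-i+2}^{-1}\emach)$; the first column of $\dhB$ vanishes, so the artificial infinite eigenvalue introduced by the grade-$(d+1)$ companion is preserved. I would then undo the two-sided scaling by writing $\hA=D_l A D_r$ and $\hB=D_l B D_r$, so that
\[
(\hA+\dhA)-z(\hB+\dhB) \;=\; D_l\bigl((A+\Delta A)-z(B+\Delta B)\bigr)D_r
\]
with $\Delta A := D_l^{-1}\dhA D_r^{-1}$ and $\Delta B := D_l^{-1}\dhB D_r^{-1}$. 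Since $D_l,D_r$ are invertible, the $\hz_k$ are also the finite eigenvalues of $(A+\Delta A)-z(B+\Delta B)$, and since the first column of $\Delta B$ remains zero, the characteristic polynomial
\[
\tq(z) \;:=\; \det\bigl((A+\Delta A)-z(B+\Delta B)\bigr)
\]
has degree exactly $d$ with roots $\hz_1,\ldots,\hz_d$.

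The crux of the proof is then the coefficient estimate $|\tq_i-p_i|=\OO(\emach)\,\widetilde\gamma_i$ for $i=0,\ldots,d$. Since $\det(A-zB)=p(z)$, multilinearity writes $\tq(z)-p(z)$ as a sum of determinants in which at least one row of $A-zB$ is replaced by the corresponding row of $\Delta A-z\Delta B$; the explicit entries of $D_l,D_r$ in \eqref{def.Dl}--\eqref{def.Dr} together with the per-column bounds of Assumption~\ref{ass01} give entry-wise estimates for $\Delta A,\Delta B$ in terms of products of $\ttr_j$ and $\emach$. Using the sparsity of the companion matrices $A,B$ (top row and subdiagonal only) and the characterization of Theorem~\ref{thm.gamma}---namely that for $k_{\ell-1}\le i\le k_\ell$, $\widetilde\gamma_i=\tr_\ell^{k_\ell-i}|p_{k_\ell}|$ coincides with the Newton-polygon height at abscissa $i$---the products of tropical roots appearing in each such residual determinant collapse, after combination with the factors carried by $D_l^{-1}$ and $D_r^{-1}$, to $\OO(\emach)\widetilde\gamma_i$ at each power $z^i$. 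This combinatorial bookkeeping, reconciling the tropical-root factors carried by the scaling with those coming from the minors of the companion and with the Newton-polygon weights, is the step I expect to be the main obstacle.

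Once the coefficient bound is established, the proof concludes quickly: both $\tq$ and $\tp(z)=p_d\prod_{k=1}^{d}(z-\hz_k)$ are degree-$d$ polynomials with the same roots, so $\tp=(p_d/\tq_d)\,\tq$. Choosing $\mu=\tq_d/p_d$ in Definition~\ref{def01} yields $\mu\,\tp_i=\tq_i$ for all $i$, and hence
\[
\berrc(\bhz)\;\le\;\max_i\frac{|p_i-\mu\,\tp_i|}{\widetilde\gamma_i}\;=\;\max_i\frac{|p_i-\tq_i|}{\widetilde\gamma_i}\;=\;\OO(\emach),
\]
which is exactly the claimed min-max elementwise backward stability.
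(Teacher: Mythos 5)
There is a genuine gap, and you have located it yourself: the coefficient estimate $|\widetilde q_i-p_i|=\OO(\emach)\,\widetilde\gamma_i$ \emph{is} the theorem, and your proposal does not prove it --- it only names it as ``the main obstacle.'' Moreover, the route you chose makes that obstacle harder than it needs to be. By undoing the scaling \emph{first}, you replace the nicely bounded perturbations $\dhA$, $\dhB$ by $\Delta A=D_l^{-1}\dhA D_r^{-1}$ and $\Delta B=D_l^{-1}\dhB D_r^{-1}$, whose entries involve ratios of products of tropical roots and can be arbitrarily large; entrywise smallness is destroyed. In the multilinear expansion of $\det\bigl((A+\Delta A)-z(B+\Delta B)\bigr)$ the first-order terms may well telescope against the Newton-polygon weights, but the cross terms (two or more rows replaced by perturbation rows) are $\OO(\emach^2)$ times products of these potentially huge entries, and nothing in your argument controls them. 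You also still have to dispose of the perturbation of $B$, which enters every coefficient of $\widetilde q$, not only the leading one; your plan never eliminates it.

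The paper's proof stays in the scaled coordinates and avoids both problems. Following Van Dooren and Dewilde, it applies an explicit sequence of elementary equivalence transformations $P\bigl[(\hA+\dhA)-z(\hB+\dhB)\bigr]Q=(\hA+\dhA')-z\hB$ that restores the companion structure \emph{exactly}, pushing all of the error into the first row of $\hA$ and removing the perturbation of $\hB$ entirely. This is precisely where Assumption~\ref{ass01} is used: eliminating the perturbation in column $i$ of $\hB$ requires dividing by the graded diagonal entry $\hb_{i-1}$ of modulus $\ttr_{\deg-i+2}^{-1}$, and the assumed column bound $\OO(\ttr_{\deg-i+2}^{-1}\emach)$ cancels exactly that factor, so the multipliers and the accumulated first-row errors $\dha_i'$ stay $\OO(\emach)$. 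Only then is the diagonal scaling undone, and only on the first row, giving $|\delp_i|=|p_\deg|\,|\dha_i'|\prod_{j=i+1}^{\deg}\ttr_j=|\dha_i'|\,\tr_\ell^{k_\ell-i}|p_{k_\ell}|=|\dha_i'|\,\widetilde\gamma_i$ by \eqref{def.troproot} and Theorem~\ref{thm.gamma}, whence $\berrc(\bhz)\le\max_i|\delp_i|/\widetilde\gamma_i=\OO(\emach)$. Your closing step (choosing $\mu$ to match leading coefficients) is fine and consistent with the paper's bound, but to repair the proposal you should either perform the structured restoration in the scaled variables as the paper does, or supply the missing entrywise analysis of \emph{all} orders of the determinant expansion; the former is the tractable option, and it is the only place the graded structure of $\dhB$ actually gets used.
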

\begin{proof}
To prove this theorem, we transform the matrix pencil $(\hA + \dhA) - z (\hB +
\dhB)$ into
\begin{equation}\label{eq20}
P \left[ (\hA + \dhA) - z (\hB + \dhB) \right] Q = (\hA + \dhA') - z \hB
\end{equation}
with $P$ and $Q$ nonsingular such that the resulting error is fully concentrated on the first row of $\hA$,
i.e., $\dhA'$ is zero except possibly for its first row.
The absolute error on each of the elements $\ha_i$ in the first row is of order $\OO(\emach)$.
We then show that performing the inverse of the original scaling/balancing
operation, i.e., $D_l^{-1} \dhA' D_r^{-1}$ with $D_l,D_r$ as in
\eqref{def.Dl}--\eqref{def.Dr} leads to a min-max backward error
of size $\OO(\emach)$.

Let us first concentrate on moving all errors towards the first row of $\hA$.
This can be done using several steps as in a Gaussian-elimination  algorithm.
In each of these steps, the error $\dhA$ stays of the order $\OO(\emach)$
while the error in $\dhB$ maintains the graded structure. Also the introduced zeros are maintained in the subsequent steps.
To indicate the order in which the elements are restored in their structured form, we use the same notation as in
\cite[Section 4]{n643} for a $4 \times 4$ example.
$$
\left[
\begin{array}{cccc}
\ha_3 + \dha_3' & \ha_2 + \dha_2' & \ha_1 + \dha_1' & \ha_0 + \dha_0' \\
1^{(8)} & 0^{(6)} & 0^{(4)} & 0^{(2)} \\
 0^{(9)} & 1^{(6)} & 0^{(4)} & 0^{(2)} \\
 0^{(9)} & 0^{(11)} & 1^{(4)} & 0^{(2)}
\end{array}
\right]
-z
\left[
\begin{array}{cccc}
0 & 0^{1)} & 0^{(1)} & 0^{(1)}  \\
0 & \hb_{1}^{(7)} & 0^{(5)} & 0^{(3)} \\
0  & 0^{(10)} &  \hb_{2}^{(5)} & 0^{(3)} \\
0  & 0^{(10)} & 0^{(12)} & \hb_{3}^{(3)}
\end{array}
\right] .
$$
Each of the absolute errors $\dha_i'$ is of the size $\OO(\emach)$. Reversing
the scaling/balancing operation leads to an absolute error $\delp_i$ on the
$i$th initial coefficient $p_i$ of $p(z)$ of size
\begin{eqnarray}\label{eq.dpi}
|\delp_i| &=& |p_d| |\dha_i'| \prod_{j=i+1}^\deg \ttr_j \\
&\stackrel{(\ref{def.ttr})}{=}&|p_d| |\dha_i'|  \underbrace{\tr_\ell,\ldots,\tr_\ell}_{\mbox{\footnotesize$(k_\ell-i)$ times}},
        \underbrace{\tr_{\ell+1},\ldots,\tr_{\ell+1}}_{\mbox{\footnotesize$m_{\ell+1}$ times}},
        \ldots,
        \underbrace{\tr_t,\ldots,\tr_t}_{\mbox{\footnotesize$m_t$ times}} \\
&\stackrel{(\ref{def.troproot})}{=}
& |p_d| |\dha_i'| \tr_\ell^{k_\ell-i} \frac{|p_{k_\ell}|}{|p_{k_{\ell+1}}|} \frac{|p_{k_{\ell+1}}|}{|p_{k_{\ell+2}}|} \cdots \frac{|p_{k_{t-1}}|}{|p_{k_{t}}|} \\
&=& |p_d| |\dha_i'| \tr_\ell^{k_\ell-i} \frac{|p_{k_\ell}|}{|p_d|}
=  |\dha_i'| \tr_\ell^{k_\ell-i} |p_{k_\ell}|
\end{eqnarray}
with $k_{\ell-1} \leq i \leq k_{\ell}$.
On using \eqref{eq.elberr}, \eqref{def.gammat}, Theorem~\ref{thm.gamma}, and \eqref{eq.dpi} we have
that
$$
        \berrc(\bz) \le \max_{i}
        \frac{|\delp_i|}{\widetilde\gamma_i}
        =\max_i{|\dha_i'|} = \OO(\emach).
        \myendproof
$$
\noqed
\end{proof}

\section{Numerical experiments}\label{sec_num_exp}
In section~\ref{secSCALAR}, the \MATLAB\  \texttt{roots} function is
compared to our new algorithm (i.e., Algorithm~\ref{alg01}). In
section~\ref{secPEVPs}, we generalize our approach to polynomial
eigenvalue problems (PEVPs) and compare the resulting algorithm to
other polynomial eigensolvers.
In all our numerical experiments, we observed that
Assumption~\ref{ass01} holds.

\subsection{Scalar polynomials}\label{secSCALAR}
The backward error measured in all the experiments of this section is the
upper bound  $\berrc$ in \eqref{ineq.upperbnd}
on the min-max elementwise backward error.

\begin{exemple}

\item\label{exp1}

We take $100$ samples of a polynomial of degree $50$. Each polynomial has random
complex zeros generated as follows:
the multiplicity is $1$, the modulus is $10^e$ with $e$ uniformly random between
$-20$ and $+20$, and the argument is uniformly random between $0$ and $2 \pi$.
Figure~\ref{fig961}(a) shows the backward errors (less than $10^{-4}$) for the
zeros returned by \texttt{roots} and by the new algorithm while
Figure~\ref{fig961}(b) also shows the larger backward errors.
\begin{figure}
\begin{center}
\subfigure[]{\includegraphics[scale=0.37,trim={0.9cm 0.0cm 0.9cm 0.7cm},clip]{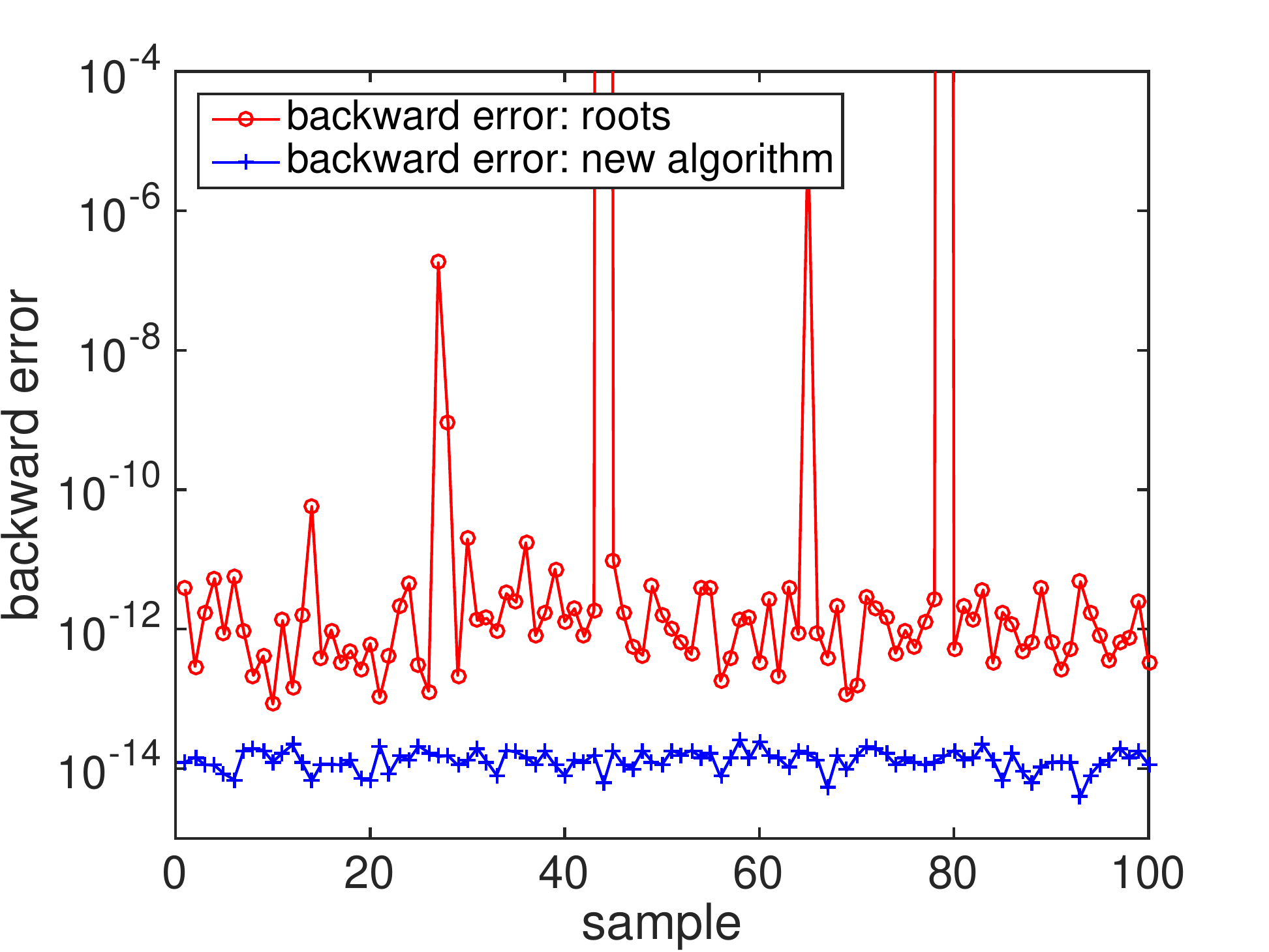}}
\subfigure[]{\includegraphics[scale=0.37,trim={0.9cm 0.0cm 0.9cm 0.7cm},clip]{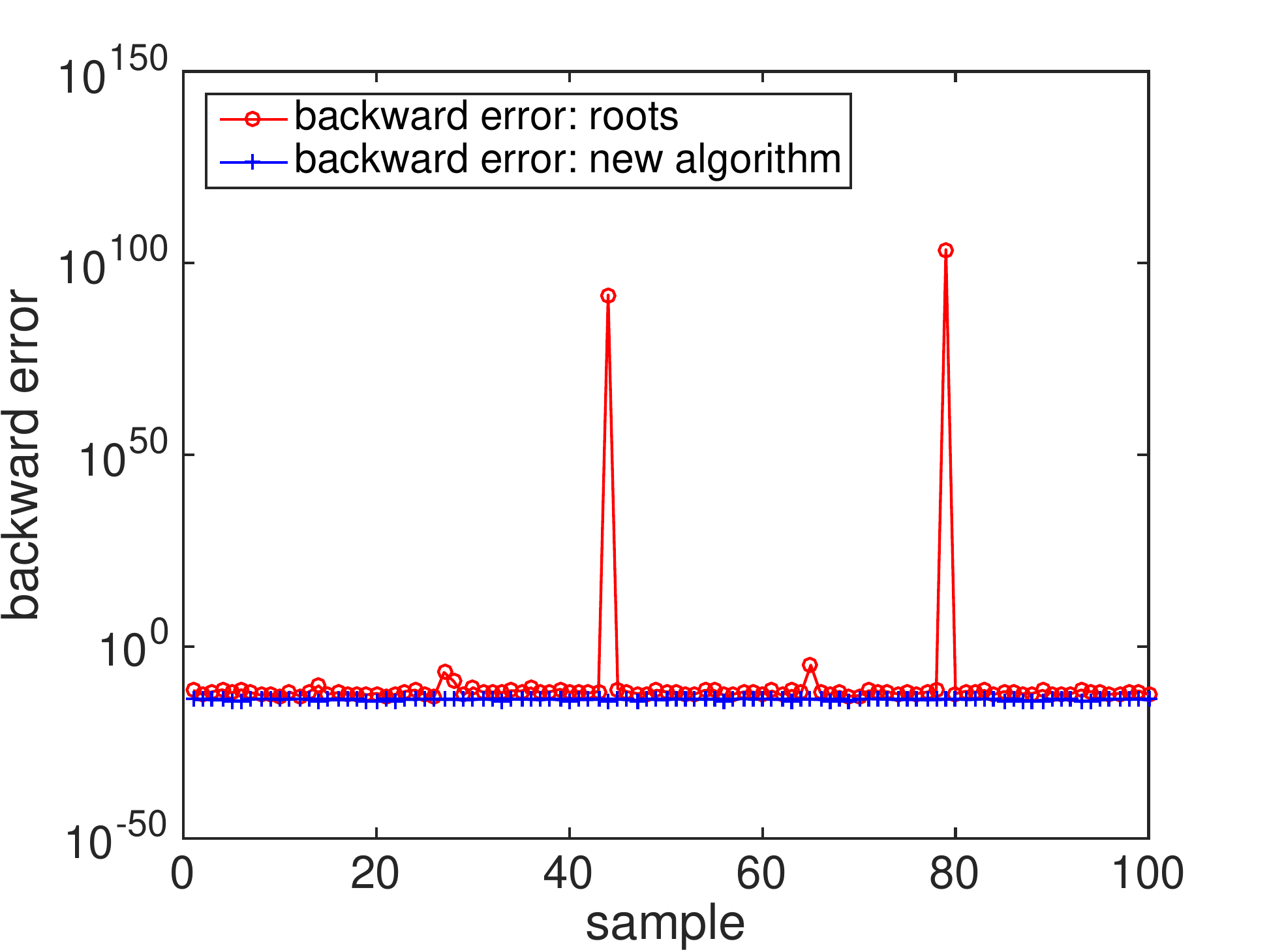}}
\caption{Min-max backward errors for the zeros computed by
the MATLAB \texttt{roots} function and the new algorithm for
Experiment~\ref{exp1}.\label{fig961}}
\end{center}
\end{figure}

For sample number $44$,  we compare
in Figure~\ref{fig963} the modulus of the coefficients of $p(z)$ to the
modulus of the coefficients of the polynomial
$\widetilde{p}(z) = p_d\prod_{k=1}^{50} (z-\hz_k)$ constructed from the zeros
$\hz_k$, $k=1,\ldots, 50$ returned by \texttt{roots} and by the new algorithm.
The zeros returned by \texttt{roots} do not reproduce the first coefficients of
the polynomial with a high relative accuracy leading to a large backward error
as shown in Figure~\ref{fig961}(b).

\begin{figure}
\centering
\includegraphics[scale=0.45]{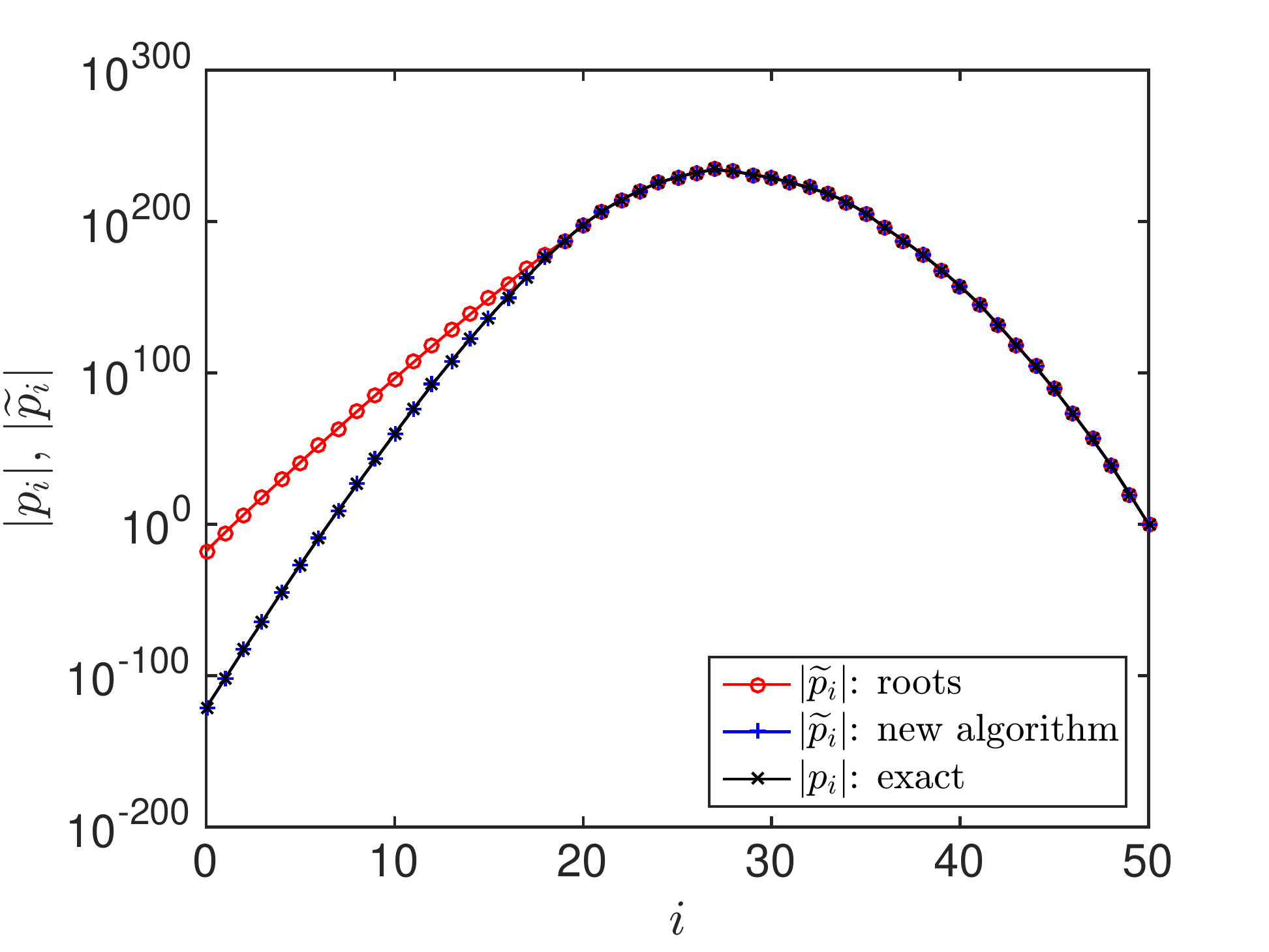}
\caption{Plot of $|p_i|$ and $|\widetilde{p}_i|$, where
$\widetilde{p}(z) = p_d\prod_{k=1}^{50} (z-\hz_k)$ is constructed from the zeros
$\hz_1,\ldots,\hz_{50}$ computed by \texttt{roots} and by the new algorithm
for sample 44 of Experiment~\ref{exp1}.\label{fig963}}
\end{figure}

\item\label{exp2}
We generate $100$ polynomials of degree $30$.
Each polynomial has random complex zeros computed as follows:
the multiplicity is uniformly random between $1$ and $30$, the modulus is $10^e$ with $e$ uniformly random between $-10$ and $+10$,
and the argument is uniformly random between $0$ and $2 \pi$.
Figure~\ref{fig964} compares the min-max backward error between {\tt roots}
and our new algorithm.
\begin{figure}
\centering
\subfigure[]{\includegraphics[scale=0.37,trim={0.9cm 0.0cm 0.9cm 0.7cm},clip]{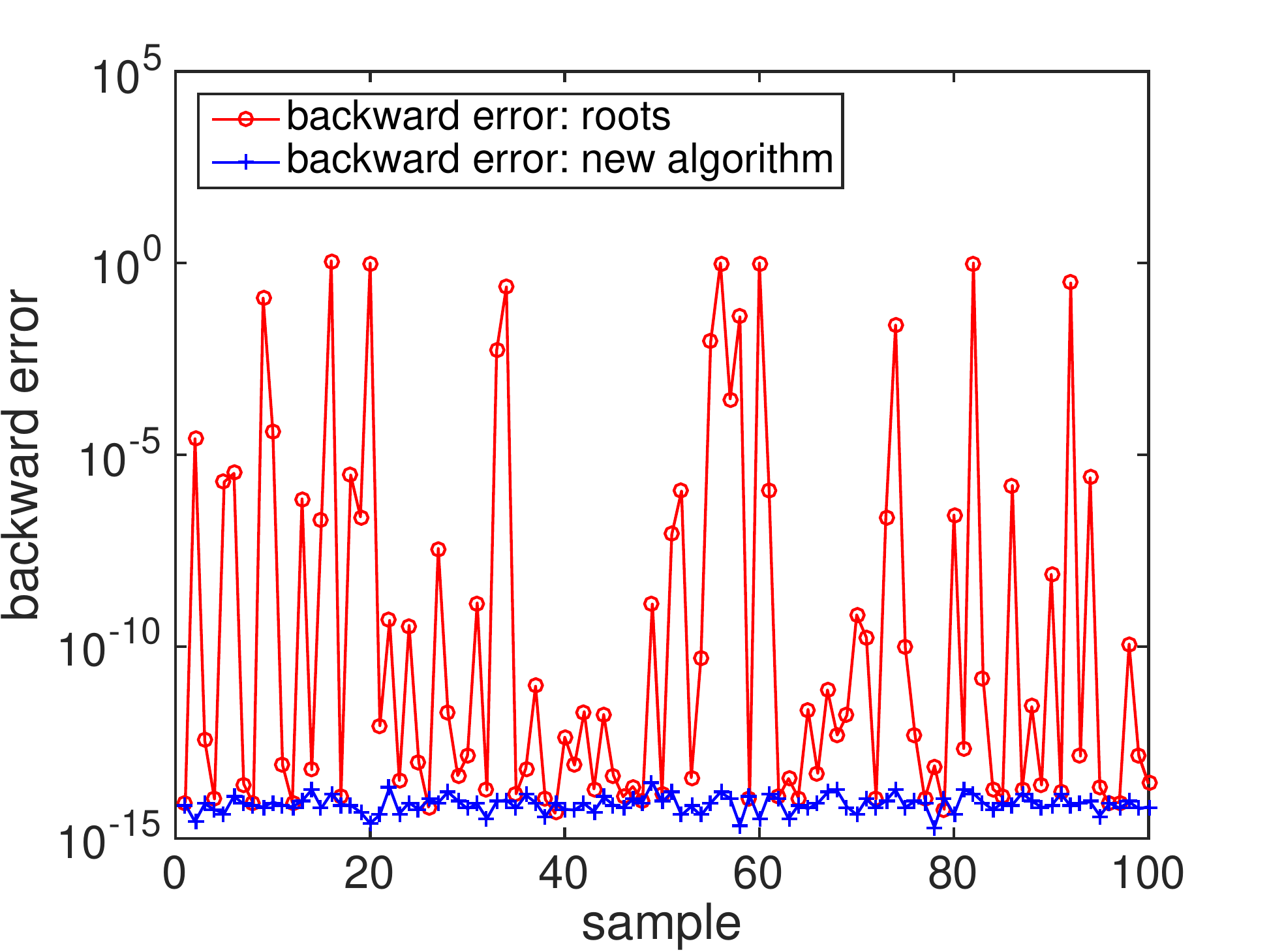}}
\subfigure[]{\includegraphics[scale=0.37,trim={0.9cm 0.0cm 0.9cm 0.7cm},clip]{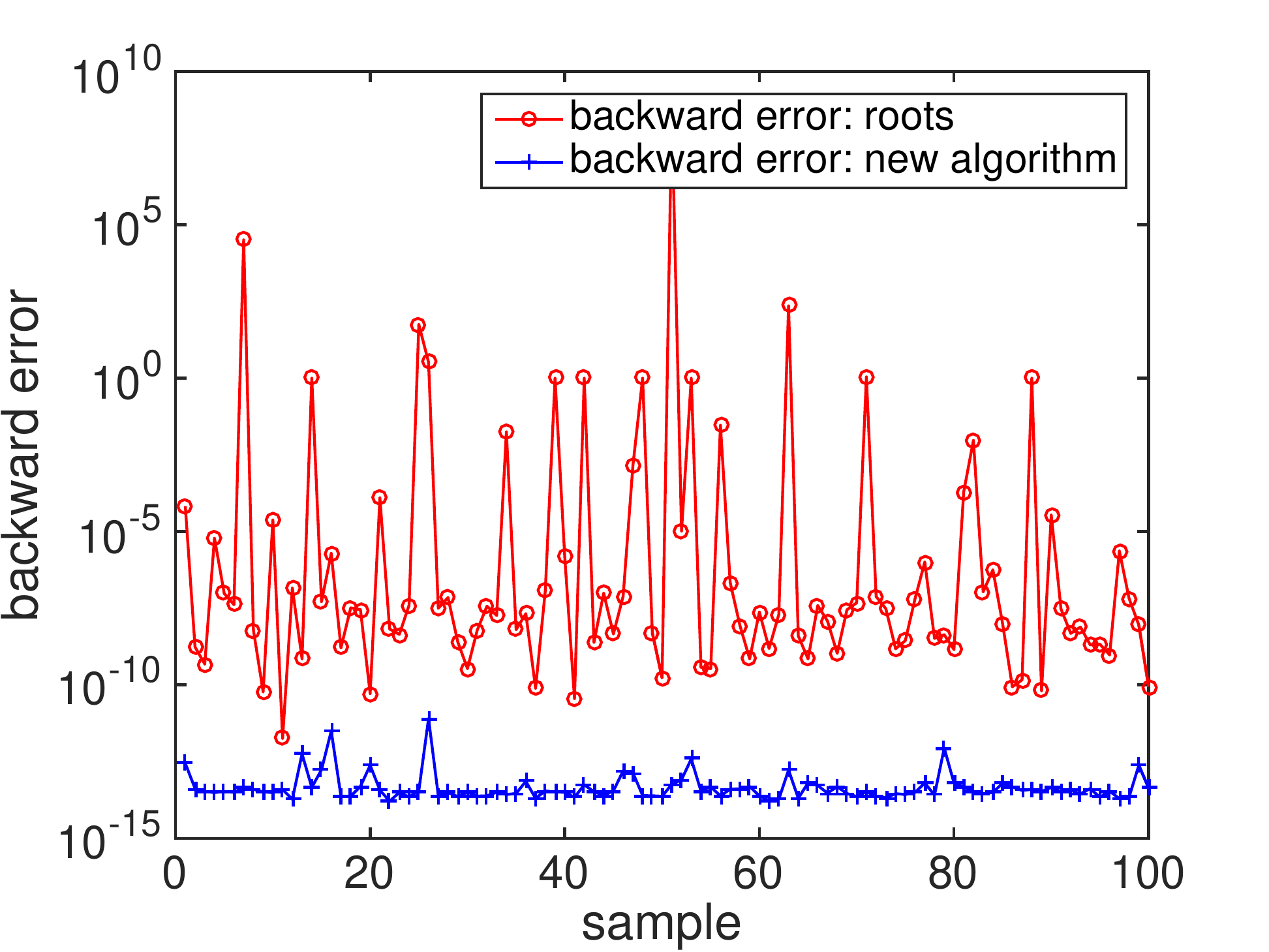}}
\caption{Min-max backward error for the zeros computed by
\MATLAB's \texttt{roots} function and the new algorithm for
Experiment~\ref{exp2} in (a) and for Experiment~\ref{exp3} in (b).\label{fig964}}
\end{figure}

\item\label{exp3}
We take $100$ samples of a polynomial of degree $100$.
Each polynomial has random complex coefficients as follows.
The modulus is $10^e$ with $e$ uniformly random between $-20$ and $+20$
and the argument is uniformly random between $0$ and $2 \pi$.
Figure~\ref{fig964}(b) shows the backward error.

\item\label{exp4}
The parameters are the same as in Experiment~3 but now the degree is $20$
instead of $100$. Figure~\ref{fig965}(a) shows the backward error.
\begin{figure}
\begin{center}
\subfigure[]{\includegraphics[scale=0.37,trim={0.9cm 0.0cm 0.9cm 0.7cm},clip]
{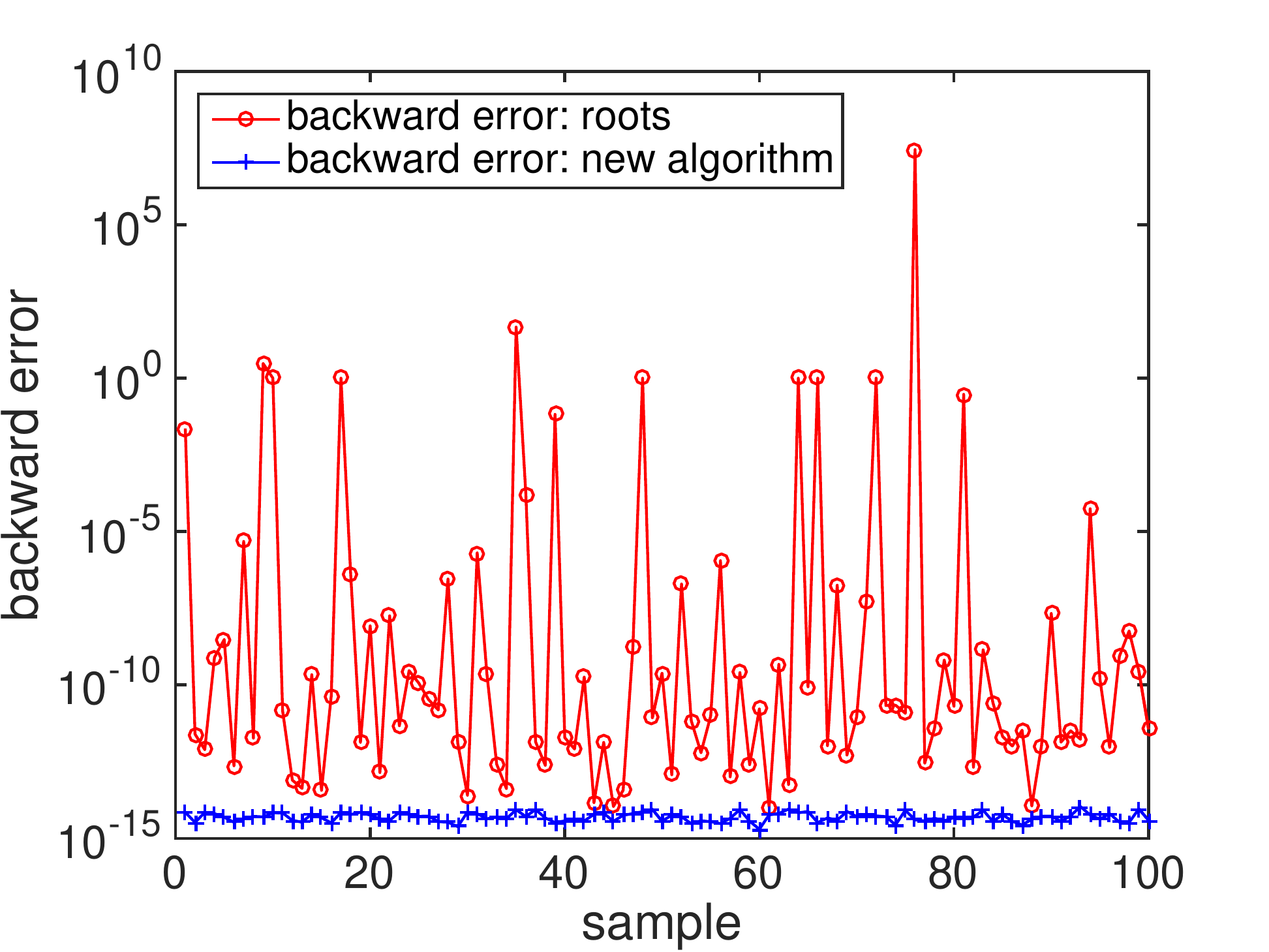}}
\subfigure[]{\includegraphics[scale=0.37,trim={0.9cm 0.0cm 0.9cm 0.7cm},clip]{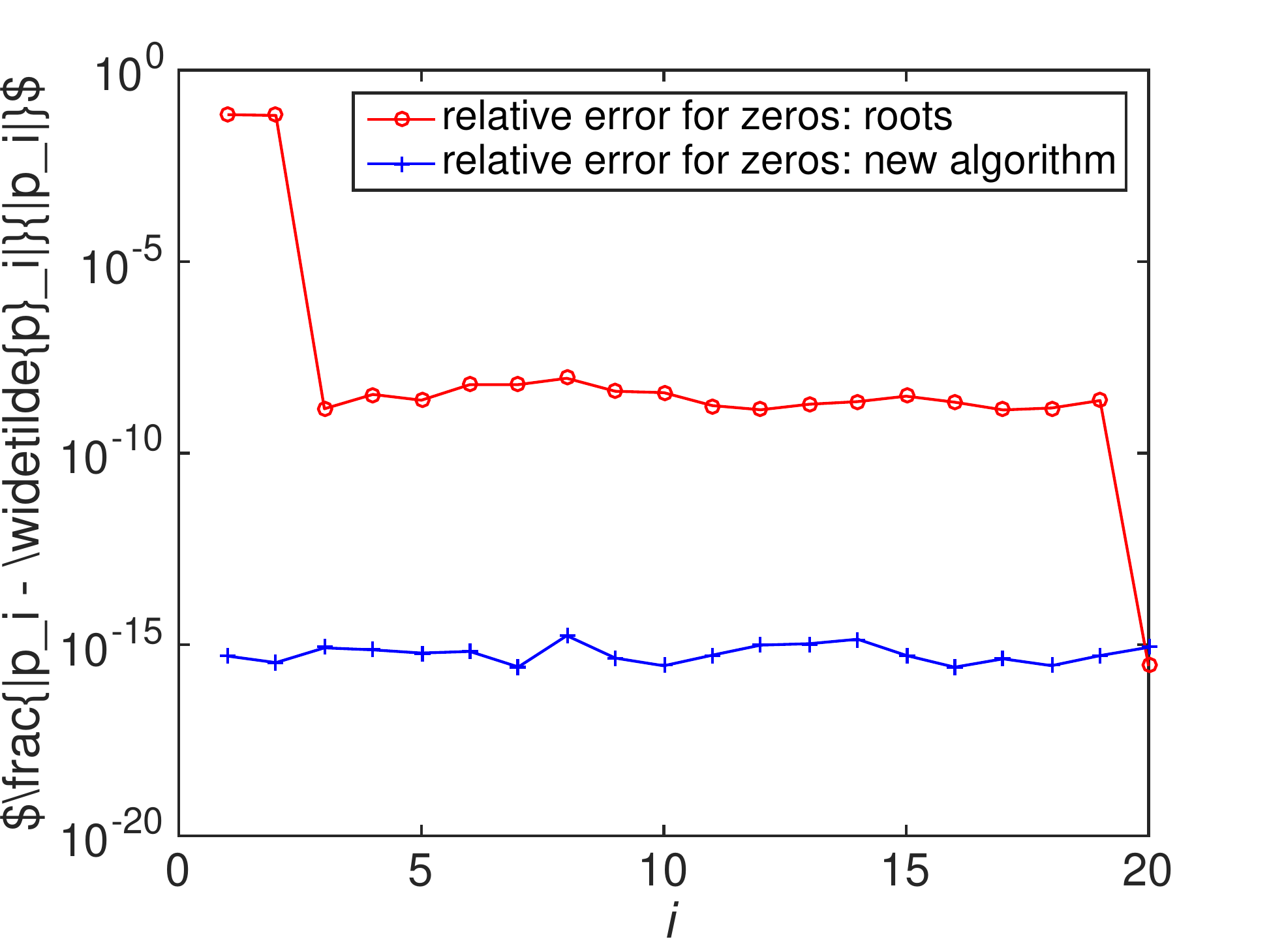}}
\caption{Plot (a): min-max backward error for the zeros computed by
\MATLAB's \texttt{roots} function and the new algorithm for
Experiment~\ref{exp4}.
Plot (b):  relative error on the computed zeros by \texttt{roots} and the new
algorithm for sample 39 of Experiment~\ref{exp4}.\label{fig965}}
\end{center}
\end{figure}
Figure~\ref{fig965}(b) compares the relative errors on the computed zeros by
{\tt roots} and the new algorithm for sample 39.
For this sample, Figure~\ref{fig966} with plot (a) for the new algorithm and
plot (b) for \texttt{roots}, shows the magnitude of the coefficients of $p(z)$
and $\widetilde{p}(z)$ with $\widetilde{p}_\deg=p_\deg$, the absolute errors
$|p_i-\widetilde{p}_i|$ and compare them to the convex hull
of the set of points $(i,\log|p_i|)$, $i = 0,1,\ldots,\deg$
as well as the points on this upper boundary multiplied by the machine precision
$\epsmach$. For our new algorithm the absolute error is not much larger than
$\epsmach$ times the convex hull indicating that the corresponding backward
error $\berrc$ is of the size of the machine precision $\epsmach$.
This is not the case for \texttt{roots}, in particular,
the absolute error on the coefficient $p_1$ is almost as large as the coefficient
itself.
This indicates that the backward error $\berrc$ is of order $1$.

\begin{figure}
\begin{center}
\subfigure[]{\includegraphics[scale=0.36,trim={0.3cm 0.0cm 0.9cm 0.4cm},clip]
{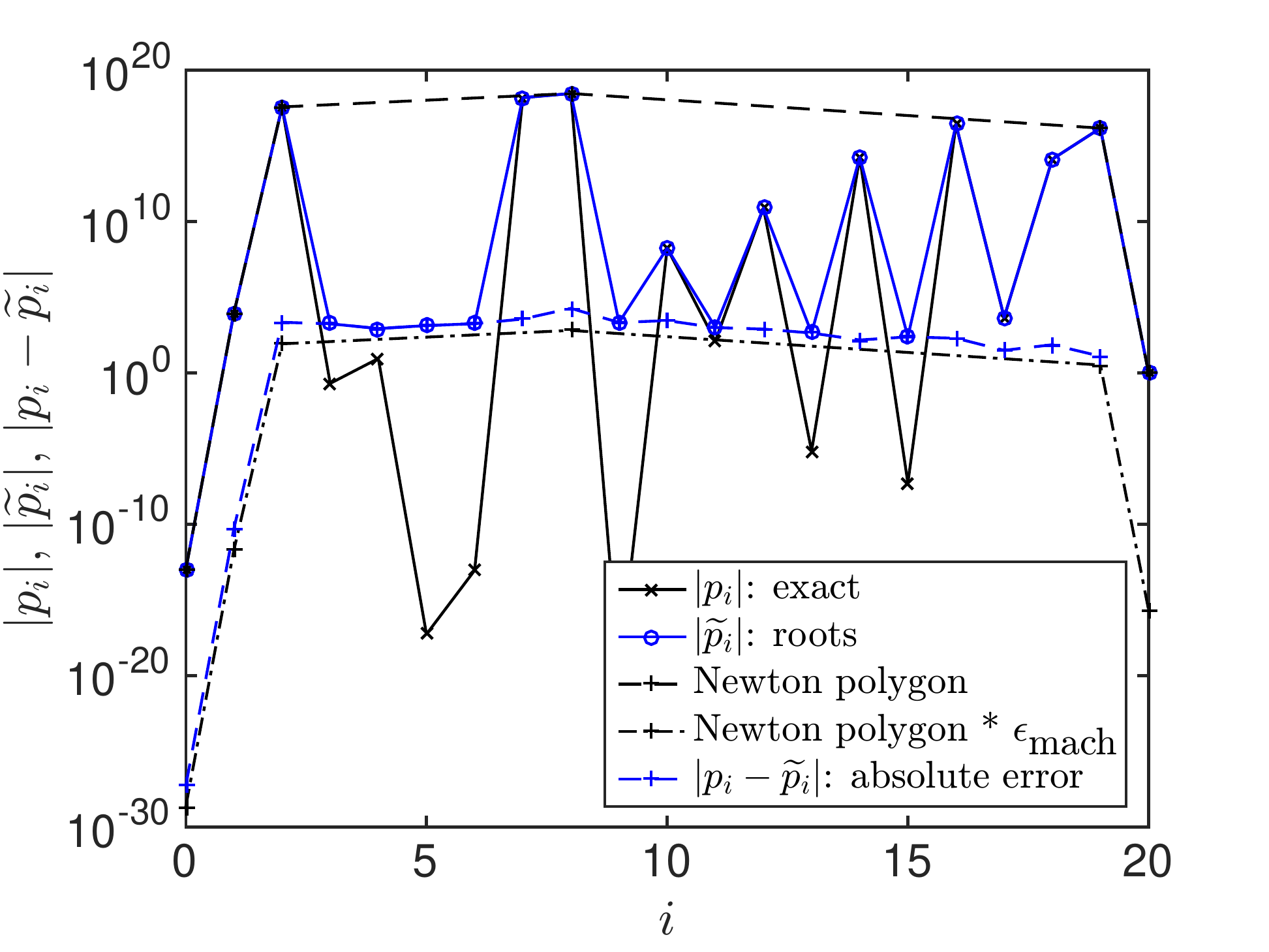}}
\subfigure[]{\includegraphics[scale=0.36,trim={0.3cm 0.0cm 0.9cm 0.4cm},clip]
{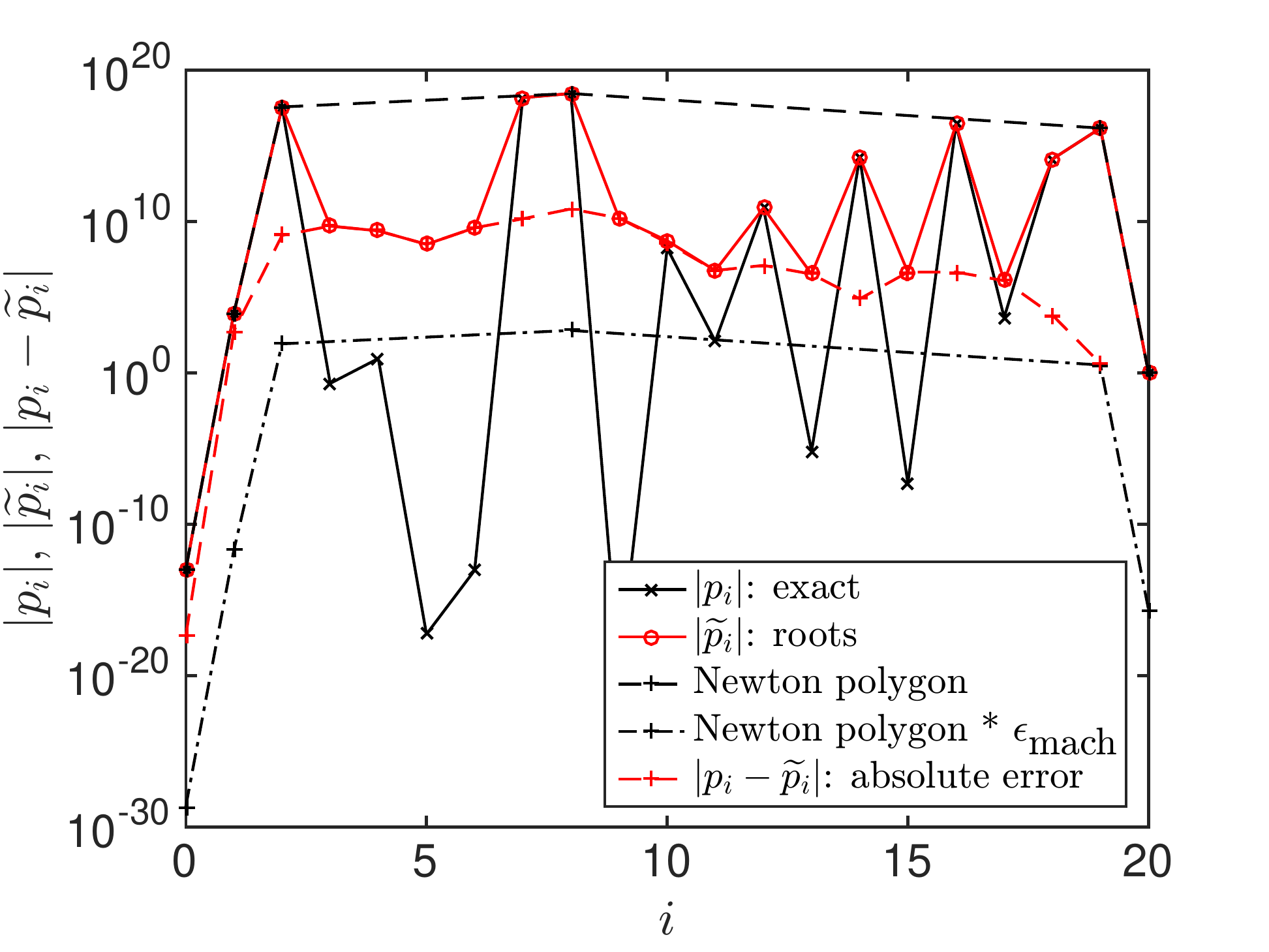}}
\caption{Sample 39 of Experiment~\ref{exp4}.
Plot (a) corresponds to zeros computed by the new algorithm, whereas plot (b) is
for zeros computed by \texttt{roots}.
The figures show the coefficients $|p_i|$ of $p(z)$,
the coefficients $|\widetilde{p}_i|$ of $\widetilde{p}(z)$
as well as the absolute error $|p_i - \widetilde{p}_i|$.
The Newton polygon of the points $(i,\log|p_i|)$, $i = 0,1,\ldots,\deg$ is shown
as well as the points on this polygon shifted down by a factor $\epsmach$.
\label{fig966}}
\end{center}
\end{figure}



\end{exemple}

\subsection{Generalization to polynomial eigenvalue problems}\label{secPEVPs}
Given a matrix polynomial $P(z)=\sum_{i=0}^\deg P_i z^i\in \CC[z]^{s\times s}$,
the polynomial eigenvalue problem (PEVP) consists of finding
scalars $\lambda$ (eigenvalues) and corresponding nonzero vectors $v$
(eigenvectors) such that
$$ P(\lambda) v = 0. $$
Algorithm~\ref{alg01} extends easily from scalar polynomial to matrix polynomial.
The entries in the companion form~\eqref{def.compa} are replaced by matrices
(i.e., $p_i$ is replaced by $P_i$, $1$ by $I_s$ and $0$ by the $s\times s$
identity matrix) to obtain a $\deg s\times \deg s$
block companion linearization $C(\l)$
for the grade $\deg+1$ matrix polynomial $0z^{\deg+1}+P(z)$.
For the two-sided diagonal scaling,
we use $D_l\otimes I_s$ and $D_r\otimes I_s$ with $D_l$ and $D_r$ as in
\eqref{def.Dl}--\eqref{def.Dr}, and $\ttr_i$ as in \eqref{def.ttr}.
The positive scalars $\tr_i$, $i=1,\ldots,t$ with $\tr_i$ of multiplicity
$m_i$ are the tropical roots of $\tpol(x) = \max_i \|P_i\| x^i$.
The resulting block pencil $\hC(\l) = (D_l\otimes I_s)C(\l)(D_r\otimes I_s)
=\hA -\l \hB$ is such that $\hA$ is well-balanced in the sense that the $s\times
s$ matrices in the first block row of $\hA$ have norms less or equal to $1$, and
$\hB$ is graded.
The deflation of the $s$ extra eigenvalues at
infinity is performed by constructing a QR factorization of the first block
column of $\hC(\l)$ and by forming $Q^*\hC(\l)$.
We can deflate the first $s$ rows and columns of the resulting pencils and call
the QZ algorithm together with the strong deflation criterion for eigenvalues at
infinity we discussed in section~\ref{sec_algo}.

We consider the following polynomial eigensolvers:

\begin{enumerate}

\item the MATLAB \polyeig function;

\item \quadeig from \cite{hmt13} when the degree $\deg=2$;

\item Gaubert and Sharify's Algorithm \cite[Alg. 1]{r957} (see also \cite[Alg.
4.1]{r953}). We use the same MATLAB implementation as in \cite{r953}, which we
refer to as the \texttt{G\&S} eigensolver.

\item the polynomial eigensolver based on a tropically scaled Lagrange linearization
using well-separated tropical roots described in \cite{VBT2017},
which we refer to as the \texttt{Lagrange} eigensolver;

\item the eigensolver based on scaled block companion pencil as described at the
start of section~\ref{secPEVPs}, which we refer to as the \texttt{new} eigensolver.
\end{enumerate}

The normwise backward error for an approximate eigenvalue $\tilde{\l}$ of $P$
can be computed as \cite{q654}
\begin{equation}\label{eq.berrlambda}
\eta_P(\tilde{\l})=\frac{\| P(\tilde{\lambda})^{-1} \|_2^{-1}}{\sum_{i=0}^d |\tilde{\lambda}|^i \|P_i\|_2}
= \frac{\sigma_{\min} (P(\tilde{\lambda}))}{\sum_{i=0}^d |\tilde{\lambda}|^i \|P_i\|_2} .
\end{equation}
This backward error is the smallest $\eps$ such that $\tilde{\l}$ is an
eigenvalues of $P(\l)+\Delta P(\l)$ with
$\Delta P(z) = \sum_{i=0}^\deg \Delta P_i z^i$ such that
$\|\Delta P_i\|\le \epsilon\|P_i\|$, $i=0,\ldots,\deg$.
For a min-max normwise backward error, it is sufficient to replace
$\sum_{i} |\tilde{\lambda}|^i \|P_i\|_2$ in \eqref{eq.berrlambda} with
$\max_{i} |\tilde{\lambda}|^i \|P_i\|_2$.
As was shown in~\eqref{bndberr} for a single approximate zero of a scalar polynomial, there is not much
difference between these two measures of the backward error.
Note that we are not looking at a global measure of the backward error here
but, instead, report
$$
        \eta_P^{\max} = \max\{\eta_P(\tilde \l):
        \mbox{$\tilde \l$ is an eigenvalue of $P$}\}
$$
which is a lower bound of the global backward error for all the computed
eigenpairs of $P$.
We consider that all the eigenvalues have been computed with a small backward
error if $\eta_P^{\max}\le \deg s \emach$, where for our numerical experiments
$\emach\approx 2.2\times 10^{-16}$.


\begin{exemple}
\setcounter{exemple}{4}
\item\label{exp5}
We consider all square problems from the NLEVP collection \cite{bhms13}
with size $s\le 300$ and with $\eta_P^{\max}\le s \emach$.
The value of $\eta_P^{\max}$ is displayed in
Table~\ref{tab100} for each polynomial eigensolver under consideration.
A backward error $\eta_P^{\max}$ larger than $\deg s\emach$ is highlighted in
red and bold. The eigensolvers \texttt{G\&S}, \texttt{Lagrange}, and \texttt{new}
return eigenvalues with small backward errors for almost all the problems as opposed to
\texttt{polyeig}. For the \texttt{cd\_player} problem,
the \texttt{G\&S} eigensolver returns eigenvalues with a large backward error,
whereas $\eta_P^{\max}\approx \deg s\emach$ for the \texttt{relative\_pose\_5pt}
problem when solved by \texttt{Lagrange} and for the \texttt{plasma\_drift}
problem when solved by \texttt{new}.
\begin{table}
\caption{
Largest backward errors $\eta_P^{\max}$ for eigenvalues
computed by the eigensolvers \texttt{polyeig}, \texttt{quadeig} (for quadratics
only), \texttt{G\&S}, \texttt{Lagrange} and \texttt{new} on test problems
from the NLEVP collection as described in Experiment~\ref{exp5}.\label{tab100}}
\begin{center}
\begin{tabular}{rrrrrrrrr}
\hline
Problem & $d$ & $s$ & \tt{polyeig} &\tt{quadeig} &
\tt{G\&S} & \tt{Lagrange} & \tt{new} \\
\hline
\ttb{           cd\_player} &   2 &  60 &   \tc{3.1e-10} &   2.5e-16 &   \tc{7.5e-07} &   4.1e-16 &   1.4e-15 \\
\ttb{         damped\_beam} &   2 & 200 &   \tc{2.6e-11} &   2.9e-16 &   1.2e-16 &   4.8e-16 &   6.9e-17 \\
\ttb{            hospital} &   2 &  24 &   \tc{2.9e-13} &   1.3e-15 &   1.6e-15 &   3.9e-15 &   2.7e-15 \\
\ttb{         metal\_strip} &   2 &   9 &   \tc{4.1e-14} &   6.8e-16 &   2.7e-16 &   3.0e-16 &   3.5e-16 \\
\ttb{              mirror} &   4 &   9 &   \tc{2.1e-14} &   \na &   3.7e-17 &   9.8e-16 &   5.4e-17 \\
\ttb{      orr\_sommerfeld} &   4 &  64 &   \tc{9.1e-08} &   \na &   7.1e-15 &   1.5e-15 &   1.4e-15 \\
\ttb{      pdde\_stability} &   2 & 225 &   \tc{1.6e-13} &   {4.0e-14} &   {1.4e-14} &   {8.8e-14} & {9.1e-14} \\
\ttb{    planar\_waveguide} &   4 & 129 &   \tc{4.7e-12} &   \na &   {3.2e-14} &   2.7e-15 &   {1.8e-14} \\
\ttb{        plasma\_drift} &   3 & 128 &   \tc{2.2e-13} &   \na &  {1.3e-14} &   {1.6e-14} &   \tc{1.0e-13} \\
\ttb{         power\_plant} &   2 &   8 &   \tc{5.3e-12} &   4.2e-18 &   3.3e-18 &   1.3e-16 &   3.1e-18 \\
\ttb{   relative\_pose\_5pt} &   3 &  10 &   9.6e-18 &   \na &   1.1e-16 &   \tc{2.1e-14} &   8.5e-17 \\
\ttb{         speaker\_box} &   2 & 107 &   \tc{1.7e-13} &   6.0e-17 &   4.1e-17 &   6.8e-16 &   8.2e-18 \\
\ttb{            wiresaw1} &   2 &  10 &   \tc{1.3e-14} &   1.4e-15 &   1.0e-15 &   1.0e-15 &   1.8e-15 \\
\ttb{            wiresaw2} &   2 &  10 &   \tc{2.0e-14} &   1.9e-15 &   1.5e-15 &   9.7e-16 &   8.3e-16 \\
\hline
\end{tabular}
\end{center}
\end{table}

\item\label{exp6}
In \cite{VBT2017}, we considered several PEVPs with large variations
in the magnitude of their eigenvalues (and, hence, also in norm of their
matrix coefficients).
The backward errors for these problems are provided in Table~\ref{tab101}, and
the backward errors for which $\eta_P^{\max}\ge\deg s\emach$ are
highlighted in red and bold.
The \texttt{Lagrange} and \texttt{new} eigensolvers return eigenvalues with a
small backward error for almost all the problems, the backward errors
highlighted in red for these two eigensolvers being just slightly larger $\deg
s\emach$ (an exception being \texttt{Lagrange} with \texttt{Problem 17}).
%
%
\begin{table}
\caption{
Largest backward errors $\eta_P^{\max}$ for eigenvalues
computed by the eigensolvers \texttt{polyeig}, \texttt{quadeig} (for quadratics),
\texttt{G\&S}, \texttt{Lagrange} and \texttt{new} on test problems
used in \cite{VBT2017}.\label{tab101}}
\begin{center}
\begin{tabular}{rrrrrrrrr}
\hline
Problem & $d$ & $s$ & \tt{polyeig} &\tt{quadeig} &
\tt{G\&S}& \tt{Lagrange} & \tt{new} \\
\hline
\tt{           Problem 1} &   7 &   4 &   \tc{3.0e-02} &   \na &   \tc{1.7e-08} &   1.4e-15 &   6.3e-16 \\
\tt{           Problem 2} &   7 &   4 &   \tc{2.7e-01} &   \na &   \tc{6.8e-13} &   2.1e-15 &   5.7e-16 \\
\tt{           Problem 3} &   2 &   4 &   2.3e-16 &   2.2e-16 &   1.1e-16 &\tc{3.2e-15} &   1.4e-16 \\
\tt{           Problem 4} &   2 &   5 &   2.2e-16 &   \tc{1.3e-11} &   1.9e-16 &   3.0e-16 &   2.3e-16 \\
\tt{           Problem 5} &   2 &   5 &   4.7e-16 &   \tc{2.8e-13} &   1.0e-16 &   4.1e-16 &   3.0e-16 \\
\tt{           Problem 6} &   2 &   2 &   3.2e-17 &   3.4e-17 &   4.0e-18 &   2.8e-16 &   4.5e-18 \\
\tt{           Problem 7} &   2 &  10 &   2.1e-16 &   \tc{1.3e-02} &   1.4e-16 &   3.2e-16 &   2.1e-16 \\
\tt{           Problem 8} &   2 &  10 &   4.9e-15 &   \tc{3.4e-12} &   3.1e-16 &   3.0e-16 &   7.8e-16 \\
\tt{           Problem 9} &   2 &  40 &   \tc{6.8e-07} &   2.1e-15 &   4.0e-16 &   4.0e-16 &   2.5e-16 \\
\tt{          Problem 10} &   5 &  20 &   \tc{3.1e-12} &   \na &   1.4e-15 &   1.4e-15 &   1.1e-15 \\
\tt{          Problem 11} &  10 &   8 &   \tc{2.9e-09} &   \na &   \tc{1.6e-13} &   2.3e-15 &   1.9e-15 \\
\tt{          Problem 12} &   4 &  30 &   \tc{2.2e-11} &   \na &   \tc{5.5e-14} &   9.0e-16 &   6.5e-15 \\
\tt{          Problem 13} &   4 &   9 &   \tc{4.6e-12} &   \na &   1.8e-15 &   \tc{1.1e-14} &   \tc{1.2e-14} \\
\tt{          Problem 14} &   4 &  64 &   \tc{9.1e-08} &   \na &   7.1e-15 &   1.5e-15 &   1.4e-15 \\
\tt{          Problem 17} &  10 &   2 &   \tc{3.2e-01} &   \na &   \tc{2.8e-01} &   \tc{8.1e-13} &   8.8e-16 \\
\tt{          Problem 18} &   4 &   4 &   \tc{1.5e-11} &   \na &   \tc{2.3e-14} &   6.9e-16 &   3.7e-16 \\
\tt{          Problem 19} &   4 &   4 &   \tc{7.8e-13} &   \na &   \tc{4.4e-14} &   7.9e-16 &   8.7e-17 \\
\tt{          Problem 20} &   5 &   4 &   \tc{8.7e-03} &   \na &   \tc{1.3e-06} &   1.1e-15 &   1.3e-15 \\
\tt{          Problem 21} &   5 &   4 &   \tc{3.1e-07} &   \na &   \tc{4.7e-07} &   8.9e-16 &   3.4e-16 \\
\tt{          Problem 22} &   4 &   4 &   \tc{7.2e-08} &   \na &   \tc{5.5e-12} &   8.4e-16 &   2.2e-16 \\
\hline
\end{tabular}
\end{center}
\end{table}

\end{exemple}


\section{Conclusions}\label{sec_concl}
We introduced a new measure of the backward error for roots of scalar
polynomials that is less strict than the elementwise relative backward error but
is still meaningful.
This new measure  allows larger perturbations on the coefficients that do not
participate much in the backward error.
For this we used an associated max-times polynomial and its tropical roots to determine
how much each coefficient can be perturbed.
We showed with examples of scalar polynomials with well conditioned zeros that
our new backward error can provide an upper bound on the forward error
that is sharper than the elementwise relative backward error.

We designed a new algorithm for computing the zeros of scalar polynomials as
well as the eigenvalues of matrix polynomials.
Our algorithm is based on a companion linearization $A-zB$ of the (matrix) polynomial
to which we artificially added a zero leading (matrix) coefficient.
In doing so, we found that we could construct a two-sided diagonal scaling that
balances
$A$ and transforms $B$
into a graded matrix.
We observed in \cite{VBT2017} that if we
use an implementation of the QZ algorithm with a strict deflation criterion for
the eigenvalues at infinity on such scaled pencils, then
the backward error on the scaled pencil has certain properties:
it is of the order of the machine precision for the scaled matrix $A$
and graded for the scaled matrix $B$.
So under the assumption that this observation holds,
we proved that our new polynomial root finder
is backward stable with respect to the newly defined backward error.
Several numerical experiments show the
stability of this approach for approximating the zeros of scalar polynomials as
well as the eigenvalues of matrix polynomials.


\bibliographystyle{abbrv}
\bibliography{strings,ft}

\end{document}